\newcommand{\R}{\mathbb{R}}
\newcommand{\V}{\mathbf{V}}
\newcommand{\J}{\mathbf{J}}
\newcommand{\sech}{\textrm{sech}}
\newcommand{\dn}{{\text{\tiny dn}}}
\newcommand{\up}{{\text{\tiny up}}}
\newcommand{\zO}{\Omega}
\newcommand{\im}{\mathbf{i}}
\newcommand{\ftran}[1]{\mathcal{F}\left\{{#1}\right\}}
\newcommand{\iftran}[1]{\mathcal{F}^{-1}\left\{{#1}\right\}}
\theoremstyle{remark}\newtheorem{remark}{Remark}
\theoremstyle{plain}\newtheorem{theorem}{Theorem}[section]
\theoremstyle{plain}
\numberwithin{equation}{section}
\newenvironment{tightcenter}{\setlength\topsep{5pt}\setlength\parskip{-5pt}\begin{center}}{\end{center}}
\title{Scattering data computation for the Zakharov-Shabat system}
\author{L. Fermo, C. van der Mee and S. Seatzu}
\address{Department of Mathematics and Computer Science\\
		University of Cagliari \\ Viale Merello 92, 09123 Cagliari, Italy}
\begin{document}

\maketitle

\begin{abstract}
A numerical method to solve the direct scattering problem for the Zakharov-Shabat system associated to the initial value problem for the nonlinear Schr\"odinger equation is proposed. The method involves the numerical solution of Volterra integral systems with structured kernels  and the identification of coefficients and parameters appearing in monomial-exponential sums. Numerical experiments confirm the effectiveness of the proposed technique.

\medskip

\noindent{\bf Keywords:} Nonlinear Schr\"o\-din\-ger equation, Inverse Scattering Transform, Integral equations

\medskip

\noindent{\bf Mathematics Subject Classification:} 41A46, 65R20, 35P25
\end{abstract}

\section{Introduction}
The problem we are addressing concerns the numerical computation of the scattering data of the Zakharov-Shabat (ZS) system associated to the initial value problem (IVP) for the nonlinear Schr\"o\-din\-ger (NLS) equation 
\begin{equation}\label{NLS}
\begin{cases}
{\im} u_t+u_{xx}\pm 2 |u|^2u=0, \quad x\in\R, \quad t>0 \\
u(x,0)=u_0(x), \quad x \in \R
\end{cases}
\end{equation}
where  $\im$ denotes the imaginary unit, $u=u(x,t)$ is the unknown potential, the subscripts $x$ and $t$ designate partial derivatives with respect to position and time, $u_0 \in L^1(\R)$ is the initial potential and the $\pm$ sign depends on symmetry properties of $u$. The plus sign regards the focusing case and the minus sign the defocusing case.

The solution of the IVP \eqref{NLS} can theoretically be obtained by means of the so-called Inverse Scattering Transform (IST) technique \cite{AC,AS}. The IST allows one, in fact, to obtain the solution of \eqref{NLS} by means of the following three steps:
\begin{itemize}
\item[(i)] starting from the initial potential $u_0$, solve the  Zakharov-Shabat (ZS) system associated to the NLS to obtain the initial scattering data;
\item[(ii)] propagate the initial scattering data in time;
\item[(iii)] solve the associated Marchenko equations whose kernels are obtained from the initial scattering data evolved in time, to obtain the solution $u(x,t)$ we are looking for.  
\end{itemize}
An effective numerical method to solve steps (ii) and (iii) has been proposed in \cite{ArRoSe2011} under the hypothesis that the initial scattering data are known. In this paper we propose a numerical method to solve the direct scattering problem (i) which is also of independent interest in some engineering fields \cite{Wahls2013}. To the best of our knowledge our method is the first numerical method proposed for the computation of all scattering data.

The paper is organized as follows. In Section \ref{sect:initial} we recall the ZS system associated to the IVP for the NLS equation. Then we recall the definition of the initial scattering data, i.e. the transmission coefficient $T(\lambda)$, the reflection coefficient from the left $L(\lambda)$ and from the right $R(\lambda)$, the bound states  $\{ \lambda_j \}$ with their multiplicities $\{ m_j\}$ and the norming constants from the left $\{(\Gamma_{\ell})_{j,s}\}$ and from the right $\{(\Gamma_{r})_{js}\}$. After that, we introduce the initial Marchenko kernels from the left $\Omega_\ell(\alpha)$ and from the right $\Omega_r(\alpha)$, the inverse Fourier transform $\rho(\alpha)$ of $R(\lambda)$ and the Fourier transform $\ell(\alpha)$ of $L(\lambda)$, respectively. Then, we show that the spectral sums from the left $S_\ell(\alpha)$ and from the right $S_r(\alpha)$ which depend on the bound states with the respective multiplicities and  norming constants from the left and from the right, can be expressed as a difference between the initial Marchenko kernels and the inverse Fourier transform $\rho(\alpha)$ of $R(\lambda)$ and the Fourier transform $\ell(\alpha)$ of $L(\lambda)$, respectively. As these differences are monomial-exponential sums, their parameters and coefficients can be identified by using the numerical method proposed in \cite{Fermo2014-AMC,Fermo2014-ETNA}. Section \ref{sect:auxiliary} is devoted to the characterization of auxiliary functions which are basic to the computation of the initial Marchenko kernels as well as of $\rho(\alpha)$ and $\ell(\alpha)$. In Section \ref{sect:Marchenko} we characterize the scattering matrix,  derive and analyze the Volterra integral equations of the second kind that characterize the initial Marchenko kernels and formulate the Fredholm integral equations that characterize the Fourier transforms $\rho(\alpha)$ and $\ell(\alpha)$. 
The numerical method  we propose to obtain the initial scattering data is illustrated in Section \ref{sect:nummethod} while in Section \ref{sect:examples} we consider two different initial potentials for which the numerical results are given in Section \ref{sect:tests}. Finally, we conclude the paper by an Appendix concerning the  study of the support of the auxiliary functions introduced in Section \ref{sect:auxiliary}.

\section{Initial scattering data}
\label{sect:initial}
Following the IST technique, to determine the initial scattering data, we must consider the ZS system associated to the NLS \eqref{NLS} \cite{Ablowitz2004}, that is the system 
\begin{equation}\label{ZS}
{\im} \J \frac{\partial \Psi}{\partial x}(\lambda,x)- \V(x) \Psi(\lambda,x)=\lambda \Psi(\lambda,x), \quad x \in \R
\end{equation}
where $\lambda \in \mathbb{C}$ is a spectral parameter and
\begin{equation}\label{J}
 \J=\begin{pmatrix}1&0\\ 0&-1\end{pmatrix},\qquad  \V=\im \begin{pmatrix}0& u_0\\
v_0 & 0\end{pmatrix} 
\end{equation}
with $v_0=u_0^*$ in the focusing case and $v_0=-u_0(x)^*$ in the defocusing case. Here and in the sequel the asterisk denotes the complex conjugate. 

The initial scattering data are the entries of the so-called scattering matrix and the coefficients and parameters of two spectral sums.
Denoting by
\begin{equation*}\label{S}
{\bf S}(\lambda)=\begin{pmatrix}T(\lambda)&L(\lambda)\\ R(\lambda)& T(\lambda)
\end{pmatrix},
\end{equation*}
the  scattering matrix, $T(\lambda)$ represents  the (initial) transmission coefficient, while $L(\lambda)$ and $R(\lambda)$ stand for the  initial reflection coefficients from the left and from the right, respectively. This matrix satisfies the following symmetry properties \cite{VanDerMee2013}
\begin{equation}\label{proprS1}
{\bf S^\dagger}(\lambda) {\bf{S}}(\lambda)={\bf{S}}(\lambda) {\bf{S}^\dagger}(\lambda)={\bf{I}}
\end{equation}
in the defocusing case and 
\begin{equation}\label{proprS2}
{\bf S^\dagger}(\lambda) \, {\bf{J}} \, {\bf{S}}(\lambda)={\bf{S}}(\lambda) \, {\bf{J}} \, {\bf S^\dagger}(\lambda)={\bf{J}}
\end{equation}
in the focusing case where ${\bf{I}}$ denotes the identity matrix. Here and in the sequel the dagger denotes the matrix conjugate transpose.
The numerical validity of these properties is used in Section \ref{sect:tests} to check the effectiveness of our algorithms.

If $T(\lambda)$ has no poles in the complex upper half plane $\mathbb{C}^+$, there are no spectral sums to identify. Otherwise, denoting by $\lambda_1,\, \dots,\, \lambda_n$ the so-called bound states, that is the finitely many poles of $T(\lambda)$ in $\mathbb{C}^+$,  and by $m_1,\,\dots,\, m_n$ the corresponding multiplicities, we have to identify the parameters $\{n,m_j,\lambda_j \}$ as well as the coefficients $\{(\Gamma_\ell)_{js}, (\Gamma_r)_{js}\}$ of the initial spectral sums from the left and from the right
\begin{align}
S_\ell(\alpha)=\sum_{j=1}^{ n} e^{i \lambda_j \alpha} \sum_{s=0}^{{m}_j-1}( \Gamma_\ell)_{js} \frac{\alpha^s}{s!}, \quad \alpha \geq 0 \label{S_ell}\\
S_r(\alpha)=\sum_{j=1}^n e^{i \lambda^*_j \alpha} \sum_{s=0}^{m_j-1}(\Gamma_r)_{js} \frac{\alpha^s}{s!}, \quad \alpha \leq 0 \label{S_r}.
\end{align} 
In \eqref{S_ell} and \eqref{S_r} the coefficients $(\Gamma_\ell)_{js}$ and $(\Gamma_r)_{js}$ are the so-called  norming constants from the left and from the right, respectively, and $0! = 1$.

In the IST technique, a crucial role is played by the initial Marchenko kernels from the left $\Omega_\ell(\alpha)$ and from the right $\Omega_r(\alpha)$, which are connected to the above spectral coefficients and spectral sums as follows: 
\begin{align}
\zO_\ell(\alpha)&=\rho(\alpha)+S_\ell(\alpha), \quad \textrm{for} \quad \alpha \geq 0 \label{Omega_ell} \\ \nonumber \\
\zO_r(\alpha)&=\ell(\alpha)+S_r(\alpha), \quad \textrm{for} \quad \alpha \leq 0 \label{omega_r} 
\end{align}
where 
\begin{align}\label{rho}
\rho(\alpha)= \frac{1}{2\pi}\int_{-\infty}^{+\infty} R(\lambda) e^{i \lambda \alpha} d \lambda =\iftran{R(\lambda)}
\end{align}
is the inverse Fourier transform of the reflection coefficient from the right $R(\lambda)$ 
and 
\begin{align}\label{elle}
\ell(\alpha)= \frac{1}{2\pi}\int_{-\infty}^{+\infty} L(\lambda) e^{-i \lambda \alpha} d \lambda =\frac{1}{2\pi} \ftran{L(\lambda)},
\end{align}
apart from the factor $1/2\pi$, is the Fourier transform of the reflection coefficient from  the left $L(\lambda)$.

We note that $\Omega_\ell(\alpha)$ and $\Omega_r(\alpha)$, respectively, reduce to:
\begin{itemize}
\item[(a)] $S_\ell(\alpha)$ and $S_r(\alpha)$ if the reflection coefficients vanish (reflectionless case);
\item[(b)] $\rho(\alpha)$ and  $\ell(\alpha)$ if there are no bound states.
\end{itemize}
\section{Auxiliary functions}\label{sect:auxiliary}
In this section we introduce four pairs of  auxiliary functions and the Volterra integral equations that characterize them. Their solution, as shown in the next section (see also \cite{Fermo2014-PUB,VanDerMee2013}), is fundamental for computing the initial Marchenko kernels as well as $\rho(\alpha)$ and $\ell(\alpha)$.

Following \cite{Fermo2014-PUB}, let us introduce,  for $y \geq x$,  the two pairs of unknown auxiliary functions
$$\bar{ {\bf K}}(x,y) \equiv\begin{pmatrix}\bar{K}^{\up}(x,y)\\ \bar{K}^{\dn}(x,y) \end{pmatrix}, \quad { {\bf K}}(x,y) \equiv\begin{pmatrix}K^{\up}(x,y)\\ K^{\dn}(x,y) \end{pmatrix},$$ \\
and, for $y \leq x$, the two pairs of unknown auxiliary functions 
$$\bar{ {\bf M}}(x,y) \equiv\begin{pmatrix}\bar{M}^{\up}(x,y)\\ \bar{M}^{\dn}(x,y) \end{pmatrix}, \quad { {\bf M}}(x,y) \equiv\begin{pmatrix}M^{\up}(x,y)\\ M^{\dn}(x,y) \end{pmatrix}.$$

For the sake of clarity, let us explain how these functions are connected to the Jost matrices associated to the ZS system \eqref{ZS}. 

As in \cite{Ablowitz2004,VanDerMee2013}, we represent the Jost matrices as the Fourier transforms of the auxiliary functions:
 
\begin{align}
(\bold{\Psi}(\lambda,x), \bold{\bar{\Psi}}(\lambda,x)) = e^{-i\lambda \bold{J} x} +\int_{x}^\infty ( {\bold{K}}(x,y), {\bold{\bar{K}}}(x,y)) \, e^{-i\lambda \bold{J} y} \, dy, \label{JostPsi}\\
(\bold{\Phi}(\lambda,x), \bold{\bar{\Phi}}(\lambda,x)) = e^{-i\lambda \bold{J} x} +\int_{-\infty}^x ( {\bold{M}}(x,y), {\bold{\bar{M}}}(x,y)) \, e^{-i\lambda \bold{J} y} \, dy \label{JostPhi},
\end{align}
from which inverting the Fourier transforms we  get
\begin{align}
( {\bold{K}}(x,y), {\bold{\bar{K}}}(x,y))= \frac{1}{2\pi} \int_{-\infty}^\infty [(\bold{\Psi}(\lambda,x), \bold{\bar{\Psi}}(\lambda,x))- e^{-i\lambda \bold{J} x}] e^{i\lambda \bold{J} y} \, d \lambda \label{KJost},\\
( {\bold{M}}(x,y), {\bold{\bar{M}}}(x,y))= \frac{1}{2\pi} \int_{-\infty}^\infty [(\bold{\Phi}(\lambda,x), \bold{\bar{\Phi}}(\lambda,x))- e^{-i\lambda \bold{J} x}] e^{i\lambda \bold{J} y} \, d \lambda \label{MJost}.
\end{align}
Now, for $y \geq x$, the pair $(\bar{K}^{\up}, \, \bar{K}^{\dn})$ is the solution of the following system of two structured Volterra integral equations \cite{Fermo2014-PUB,VanDerMee2013}: 
\begin{equation}\label{nucleiKbarrati}
\begin{cases}
\bar{K}^\up(x,y)+\displaystyle\int_x^\infty u_0(z) \, \bar{K}^\dn(z,z+y-x) \, dz=0\\ \\
\bar{K}^\dn(x,y)-\displaystyle\int_x^{\tfrac{1}{2}(x+y)} v_0(z) \, \bar{K}^\up(z,x+y-z) \, dz=\tfrac{1}{2} v_0(\tfrac{1}{2}(x+y))
\end{cases}
\end{equation}
while the pair $(K^{\up}, \, K^{\dn})$ is the solution of the system 
\begin{equation}\label{nucleiK}
\begin{cases}
K^\up(x,y)+\displaystyle\int_x^{\tfrac{1}{2}(x+y)}\,u_0(z)\, K^\dn(z,x+y-z) \, dz=-\tfrac{1}{2}u_0(\tfrac{1}{2}(x+y)) \\ \\
K^\dn(x,y)-\displaystyle\int_x^\infty \,v_0(z) \, K^\up(z,z+y-x) \, dz=0.
\end{cases}
\end{equation}

Similarly, for $y \leq x$ the pair $(\bar{M}^{\up}, \, \bar{M}^{\dn})$ is the solution of the system of two structured Volterra equations:
\begin{equation}\label{nucleiMbarrati}
\begin{cases}
\bar{M}^\up(x,y)-\displaystyle\int_{\tfrac{1}{2}(x+y)}^x u_0(z)\, \bar{M}^\dn(z,x+y-z) \, dz =\tfrac{1}{2}u_0(\tfrac{1}{2}(x+y)) \\ \\ 
\bar{M}^\dn(x,y)+\displaystyle\int_{-\infty}^x v_0(z) \, \bar{M}^\up(z,z+y-x) \, dz=0
\end{cases}
\end{equation}
and the pair $({M}^{\up}, \, {M}^{\dn})$ is the solution of the following system
\begin{equation}\label{nucleiM}
\begin{cases}
M^\up(x,y)-\displaystyle\int_{-\infty}^x u_0(z)\,M^\dn(z,z+y-x) \, dz=0 \\ \\
M^\dn(x,y)+\displaystyle\int_{\tfrac{1}{2}(x+y)}^x v_0(z) \,M^\up(z,x+y-z) \, dz=-\tfrac{1}{2} v_0(\tfrac{1}{2}(x+y)).
\end{cases}
\end{equation}

From the computational point of view, on the bisector $y=x$, it is important to note that  each auxiliary function is uniquely determined  by the initial solution or its partial integral energy. In fact, setting $y=x$ in each of the four Volterra systems, we immediately obtain:
\begin{align}
\bar{K}^\dn(x,x)&= \frac 1 2 v_0(x), \quad 
\bar{K}^\up(x,x)  =-\frac 12 \int_x^\infty u_0(z) \, v_0(z) \, dz, \label{propr1}\\
K^\up(x,x)&= -\frac 1 2   u_0(x), \quad 
K^\dn(x,x)=-\frac{1}{2}\int_x^\infty u_0(z) \, v_0(z) \, dz, \label{propr2} \\
M^\dn(x,x) &=-\frac 1 2   v_0(x),\quad  M^\up(x,x) =- \frac{1}{2} \int_{-\infty}^x u_0(z) \, v_0(z)\, dz, \label{propr3} \\
\bar{M}^\up(x,x)&= \frac 1 2 u_0(x),\quad 
\bar{M}^\dn(x,x)= - \frac{1}{2}\int_{-\infty}^x u_0(z) \, v_0(z) \, dz \label{propr4}.
\end{align}


Moreover, let us mention that the functions $\bold{\bar{K}}$ and $\bold{{K}}$, as well as the functions $\bold{\bar{M}}$ and $\bold{{M}}$, are related to each other. 
Indeed, in the focusing case the following symmetry properties hold true \cite{VanDerMee2013}  

\begin{equation} \label{symmpropr1}
\left( \begin{matrix}
K^{\up}(x,y) \\ \\  K^{\dn}(x,y)
\end{matrix} \right)= \left( \begin{matrix}
-\bar{K}^{\dn}(x,y)^*  \\  \\  \bar{K}^{\up}(x,y)^* 
\end{matrix} \right), \qquad 
\left( \begin{matrix}
M^{\up}(x,y) \\ \\  M^{\dn}(x,y)
\end{matrix} \right)=  \left( \begin{matrix}
\bar{M}^{\dn}(x,y)^*
\\ \\  -\bar{M}^{\up}(x,y)^* \end{matrix}
 \right)
\end{equation}
while in the defocusing case the following symmetry relations can be proved
\begin{equation} \label{symmpropr2}
\left( \begin{matrix}
K^{\up}(x,y) \\ \\  K^{\dn}(x,y)
\end{matrix} \right)= \left( \begin{matrix}
\bar{K}^{\dn}(x,y)^*  \\  \\  \bar{K}^{\up}(x,y)^* 
\end{matrix} \right), \qquad 
\left( \begin{matrix}
M^{\up}(x,y) \\ \\  M^{\dn}(x,y)
\end{matrix} \right)=  \left( \begin{matrix}
\bar{M}^{\dn}(x,y)^*
\\ \\  \bar{M}^{\up}(x,y)^* \end{matrix}
 \right).
\end{equation}

\begin{remark} \label{remark1}
Let us note that, in virtue of \eqref{symmpropr1}-\eqref{symmpropr2}, we only need to compute numerically systems \eqref{nucleiKbarrati} and \eqref{nucleiMbarrati} or systems \eqref{nucleiK} and \eqref{nucleiM} and then compute the remaining auxiliary functions by resorting to the above symmetry properties . 
\end{remark}

\begin{remark}\label{remark2}
If the potentials $u_0$ and $v_0$ are even functions, the auxiliary functions ${\bf{M}}$ can easily be obtained from the ${\bf{K}}$ functions as follows:
\begin{align*}
M^{\up}(x,y)&=\bar{K}^{\up}(-x,-y) \qquad M^{\dn}(x,y)=-\bar{K}^{\dn}(-x,-y) \\
\bar{M}^{\up}(x,y)&=-K^{\up}(-x,-y) \qquad \bar{M}^{\dn}(x,y)=K^{\dn}(-x,-y).
\end{align*}
Similarly, if the potential $u_0$ and $v_0$ are odd functions, we have
\begin{align*}
M^{\up}(x,y)&=\bar{K}^{\up}(-x,-y) \qquad M^{\dn}(x,y)=\bar{K}^{\dn}(-x,-y) \\
\bar{M}^{\up}(x,y)&=K^{\up}(-x,-y) \qquad \bar{M}^{\dn}(x,y)=K^{\dn}(-x,-y).
\end{align*}
Consequently, in these cases we only need to compute numerically one system, for instance system \eqref{nucleiKbarrati}. 
\end{remark}
\section{Initial Marchenko kernels, scattering matrix and Fourier transforms of reflection coefficients} \label{sect:Marchenko}
This section consists of two parts. In the first part we recall the Volterra integral equations that we solve to obtain the initial Marchenko kernels $\Omega_\ell(\alpha)$ and $\Omega_r(\alpha)$. 
In the second part we explain how compute the scattering matrix and the Fourier transforms  of the reflection coefficients $R$ and $L$.
\subsection{Initial Marchenko kernels}
\label{subsect:eqFredholm}
Following \cite[2.50a and 2.50b]{VanDerMee2013} we can say that, for $y \geq x \geq 0$, the Marchenko kernel $\Omega_\ell$ is connected to the auxiliary functions ${K}^{\dn}$ and $\bar{K}^{\dn}$ as follows:
\begin{equation}\label{Mar_K}
\zO_\ell(x+y)
+\displaystyle \int_x^\infty {K}^{\dn}(x,z)\, \zO_\ell(z+y) \, dz=-\bar{K}^{\dn}(x,y). 
\end{equation}
Similarly, for $y \leq x \leq 0$, the Marchenko kernel $\Omega_r$ is connected to the auxiliary functions ${M}^{\dn}$ and $\bar{M}^{\dn}$ in this way:
\begin{equation}\label{Mar_M}
\zO_r(x+y)+\displaystyle\int_{-\infty}^x {M}^{\up}(x,z) \,\zO_r(z+y) \, dz =-\bar{M}^{\up}(x,y).
\end{equation}
As a result, assuming known the auxiliary functions, \eqref{Mar_K} and \eqref{Mar_M} can be interpreted as structured Volterra integral equations having the initial Marchenko kernels $\Omega_\ell$ and $\Omega_r$ as their unknowns.

It is important to note that, from the computational point of view, each Marchenko kernel can be treated as a function of only one variable, as we only have to deal with the sum of the two variables. 
\subsection{The scattering matrix and the Fourier transforms of the reflection coefficients}
\label{subsect:scattering}
Let us begin by recalling that, as proposed in \cite{VanDerMee2013}, the coefficients of the scattering matrix $S(\lambda)$ can be represented as follows:
\begin{align}
T(\lambda)& = \frac{1}{a_{\ell 4}(\lambda)}= \frac{1}{a_{r1}(\lambda)},\label{T}\\
L(\lambda)& =\frac{a_{\ell2}(\lambda)}{a_{\ell4}(\lambda)}=-\frac{a_{r2}(\lambda)}{a_{r1}(\lambda)},\label{L}\\
R(\lambda)&=\frac{a_{r3}(\lambda)}{a_{r1}(\lambda)}=-\frac{a_{\ell 3}(\lambda)}{a_{\ell 4}(\lambda)} \label{R}
\end{align}
where the $\{a_{\ell j}(\lambda)\}$ and the $\{a_{rj}(\lambda)\}$ denote the entries of the transition matrices from the left and from the right, respectively. More precisely,
\begin{align}
\begin{cases}
a_{\ell1}(\lambda)&= 1-\displaystyle\int_{\R^+} e^{-i \lambda z} \, \bar{\Phi}^{\dn}(z) dz \\
a_{\ell2}(\lambda)&=-\displaystyle\int_{\R} e^{2i \lambda y} u_0(y) dy -\displaystyle \int_{\R} e^{i \lambda z} \Phi^{\dn}(z) dz \\
a_{\ell3}(\lambda)&=\displaystyle \int_{\R} e^{-2i \lambda y}   v_0(y) dy + \displaystyle \int_{\R} e^{-i \lambda z} \bar{\Phi}^{\up}(z) dz \\
a_{\ell4}(\lambda)&= 1+\displaystyle\int_{\R^+} e^{i \lambda z} {\Phi}^{\up}(z) dz,
\end{cases} 
\label{al}
\end{align}
where
\begin{align}
\bar{\Phi}^{\dn}(z)&= \displaystyle\int_{\R} u_0(y) \bar{K}^\dn(y,y+z) dy,\qquad 
\Phi^{\dn}(z)= \displaystyle \int_{-\infty}^{\frac z2 } u_0(y) {K}^{\dn}(y,z-y) dy,  \label{Phi1}\\
{\Phi}^{\up}(z)&=\displaystyle \int_{\R}   v_0(y) K^\up(y,y+z) dy, \qquad \bar{\Phi}^{\up}(z)=\displaystyle \int_{-\infty}^{\frac z2 }   v_0(y) \bar{K}^{\up}(y,z-y) dy,  \label{Phi2} 
\end{align}
and
\begin{align}
\begin{cases}
a_{r1}(\lambda)&= 1+\displaystyle\int_{\R^+} e^{i \lambda z} \Psi^{\dn}(z) dz \\
a_{r2}(\lambda)&=\displaystyle\int_{\R} e^{2i \lambda y} u_0(y) dy +\displaystyle \int_{\R} e^{i \lambda z}  \bar{\Psi}^{\dn}(z) dz  \\
a_{r3}(\lambda)&=-\displaystyle\int_{\R} e^{-2i \lambda y}   v_0(y) dy -\displaystyle \int_{\R} e^{-i \lambda z} \Psi^{\up}(z)  dz \\
a_{r4}(\lambda)&= 1-\displaystyle\int_{\R^+} e^{-i \lambda z} \bar{\Psi}^{\up}(z) dz  
\end{cases}
\label{ar}
\end{align}
\begin{align}
\Psi^{\dn}(z)&=\int_{\R} u_0(y) M^\dn(y,y-z) dy, \qquad 
\bar{\Psi}^{\dn}(z)=\int_{\frac{z}{2}}^{+\infty} u_0(y) \bar{M}^{\dn}(y,z-y) dy, \label{psi1} \\
\Psi^{\up}(z)&=\int_{\frac{z}{2}}^{+\infty}   v_0(y) M^{\up}(y,z-y) dy, \qquad \bar{\Psi}^{up}(z)=\int_{\R}   v_0(y) \bar{M}^\up(y,y-z) dy \label{psi2}. 
\end{align}

While the approximation of $T$ simply requires the computation of $a_{\ell4}(\lambda)$ and $a_{r1}(\lambda)$, that of $\rho$ and $\ell$ is more complicated. In fact, to approximate $\rho(\alpha)$ and $\ell(\alpha)$ we first have to compute the scattering coefficients by means of \eqref{al}-\eqref{ar}, then the reflection coefficients $R(\lambda)$ and $L(\lambda)$ by using \eqref{R} and \eqref{L} and, finally, $\rho(\alpha)$ and $\ell(\alpha)$ by resorting to the inverse and direct Fourier transforms as indicated in \eqref{rho} and \eqref{elle}. 

The stability of this numerical procedure essentially depends on the decay of $R(\lambda)$ and $L(\lambda)$ for $\lambda \to \pm \infty$ since the smoother the initial potential the faster their decay. If the initial potential has jump discontinuities then $R$ and $L$ decay as $\lambda^{-1}$ for $\lambda \to \infty$ while if $u_0 \in C^{\infty}(\mathbb{R})$ then $R$ and $L$ decay superpolynomially. 

Hence, this procedure is effective whenever the initial potential is smooth enough, that is at least $u_0 \in C(\mathbb{R})$. If this is not the case  the Fourier transforms $\rho(\alpha)$ and $\ell(\alpha)$ could be approximated by solving structured Fredholm integral equations stated in the following theorems. The development of an effective algorithm for solving these equations  is devoted to a subsequent paper.

\begin{theorem}
The function $\rho(\alpha)$ is the unique solution of each of  the following Fredholm integral equation of the second kind :
\begin{align}
\rho(\alpha)+\int_0^\infty \Phi^{\up}(z) \, \rho(z+\alpha) dz&=-\frac 12 v_0\left(\frac{\alpha}{2}\right)-\bar{\Phi}^{\up}(\alpha), \label{eq_rho1}\\
\rho(\alpha)+\int_0^\infty \Psi^{\dn}(z) \, \rho(z+\alpha) dz&=-\frac 12 v_0\left(\frac{\alpha}{2}\right)-\Psi^{\up}(\alpha), \label{eq_rho2}
\end{align}
where $\Phi^{\up}$ and $\bar{\Phi}^{\up}$ are given in \eqref{Phi1}-\eqref{Phi2} and $\Psi^{\dn}$ and $\Psi^{\up}$ are defined in \eqref{psi1}-\eqref{psi2}.
\end{theorem}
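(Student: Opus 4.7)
The plan is to derive the two Fredholm equations by cross-multiplying the representation of $R(\lambda)$ in \eqref{R} with the denominator $a_{\ell 4}(\lambda)$ or $a_{r1}(\lambda)$ and then applying the inverse Fourier transform term by term. Specifically, for \eqref{eq_rho1} I would start from $a_{\ell 4}(\lambda)\,R(\lambda)=-a_{\ell 3}(\lambda)$ and substitute the expressions for $a_{\ell 4}$ and $a_{\ell 3}$ from \eqref{al}, obtaining
\[
R(\lambda)+\int_{0}^{\infty} e^{i\lambda z}\Phi^{\up}(z)\,R(\lambda)\,dz=-\int_{\R} e^{-2i\lambda y} v_0(y)\,dy-\int_{\R}e^{-i\lambda z}\bar\Phi^{\up}(z)\,dz.
\]
Applying $\mathcal{F}^{-1}$ in $\lambda$ and using $\mathcal{F}^{-1}\{e^{i\lambda z}\}(\alpha)=\delta(\alpha+z)$ (for the shift arising on the left) together with $\mathcal{F}^{-1}\{e^{-2i\lambda y}\}(\alpha)=\tfrac{1}{2}\delta(\alpha/2-y)$ (for the $v_0$ integral) and the Fourier inversion identity on $\bar\Phi^{\up}$ will yield \eqref{eq_rho1}. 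For \eqref{eq_rho2} the same procedure is applied to $a_{r1}(\lambda)\,R(\lambda)=a_{r3}(\lambda)$ using \eqref{ar}.

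The routine bookkeeping consists in checking that the convolution identity
\[
\mathcal{F}^{-1}\Bigl\{\int_{0}^{\infty}e^{i\lambda z}\Phi^{\up}(z)\,dz\cdot R(\lambda)\Bigr\}(\alpha)=\int_{0}^{\infty}\Phi^{\up}(z)\,\rho(z+\alpha)\,dz,
\]
(and its counterpart with $\Psi^{\dn}$) is legitimate, which follows from the fact that $\Phi^{\up},\Psi^{\dn}\in L^1(\R^+)$ under the standing assumption $u_0\in L^1(\R)$, together with the estimates on the auxiliary kernels from Section~\ref{sect:auxiliary}. The translations of the reflection coefficient then turn into translations of its inverse Fourier transform $\rho$ inside the integral.

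For uniqueness I would invoke the Fredholm alternative for equations of the second kind: the integral operators $\rho\mapsto\int_0^\infty\Phi^{\up}(z)\rho(z+\cdot)\,dz$ and $\rho\mapsto\int_0^\infty\Psi^{\dn}(z)\rho(z+\cdot)\,dz$ act compactly on the natural $L^2$ (or $L^1$) space on $\R$, so it suffices to show that the homogeneous equations admit only the trivial solution. Taking the Fourier transform back, the homogeneous equation reads $a_{\ell 4}(\lambda)\,\hat\rho(\lambda)=0$ (respectively $a_{r1}(\lambda)\,\hat\rho(\lambda)=0$); since $a_{\ell 4}$ and $a_{r1}$ coincide with $1/T(\lambda)$ and $T(\lambda)$ is finite on $\R$ (no bound states lie on the real axis), neither symbol vanishes on $\R$, forcing $\hat\rho\equiv0$ and hence $\rho\equiv0$.

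The main obstacle I anticipate is precisely this uniqueness step: one has to argue that the symbols $a_{\ell 4}$ and $a_{r1}$ are non-vanishing on the real line, which is a genuine scattering-theoretic input rather than a formal Fourier manipulation. The derivation of the equations themselves is a bookkeeping exercise in applying $\mathcal{F}^{-1}$ to the product identity, but guaranteeing compactness of the kernel operator and the non-vanishing of the symbol on $\R$ requires the decay and regularity properties of $\Phi^{\up}$, $\Psi^{\dn}$ and the standing hypothesis that $T$ has no real poles.
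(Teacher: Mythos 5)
Your derivation of \eqref{eq_rho1} and \eqref{eq_rho2} is essentially the paper's own argument: cross-multiply $R=-a_{\ell3}/a_{\ell4}$ (resp. $R=a_{r3}/a_{r1}$), recognize $a_{\ell4}-1$ and $-a_{\ell3}$ as Fourier transforms of $\Phi^{\up}(-\cdot)H(-\cdot)$ and of $\tfrac12 v_0(\cdot/2)+\bar{\Phi}^{\up}$, and invert using the convolution theorem, so your delta-function bookkeeping is just a rephrasing of the same step. The only genuine difference is that you also sketch the uniqueness claim (Fredholm alternative plus non-vanishing of the symbols $a_{\ell4}$, $a_{r1}$ on the real line), a point the paper asserts in the statement but does not prove; your identification of the absence of real zeros of $a_{\ell4}$ (spectral singularities) as the essential scattering-theoretic input there is correct.
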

\begin{proof}
Let us first note that from \eqref{R}

\begin{equation}\label{al1_al3}
a_{\ell 4}(\lambda) R(\lambda)=-a_{\ell 3}(\lambda)
\end{equation}
where $a_{\ell 4}$ and $a_{\ell 3}$ are defined in \eqref{al}. Introducing the Heaviside function
$H(z)=1$ for $z \geq 0$ and $H(z)=0$ for $z<0$, taking into account that $$R(\lambda)=\ftran{\rho(\alpha)}$$ and using \eqref{al1_al3} we can write
$$\left(1+\ftran{\Phi^{\up}(-\alpha) H(-\alpha)}\right) \ftran{\rho(\alpha)} =-\ftran{\frac{1}{2} v_0\left(\frac{\alpha}{2}\right)+\bar{\Phi}^{\up}(\alpha)}.$$
Hence, applying the inverse Fourier transform and the convolution theorem, we have
$$\rho(\alpha)+\left( \Phi^{\up}(-\alpha) H(-\alpha) \right)* \rho(\alpha)=-\frac{1}{2} v_0\left(\frac{\alpha}{2} \right)-\bar{\Phi}^{\up}(\alpha),$$
and then the equation \eqref{eq_rho1} is an immediate consequence of the convolution definition and the Heaviside function. Equation \eqref{eq_rho2} can be obtained similarly, noting that $R(\lambda)$ satisfies the relation 
$$a_{r1}(\lambda) R(\lambda)=a_{r3}(\lambda)$$
and that
$$a_{r1}(\lambda)=1+ \ftran{\Psi^{\dn}(-\alpha) H(-\alpha)} \quad  \text{and} \quad  a_{r 3}(\lambda)=\ftran{\frac{1}{2} v_0\left(\frac{\alpha}{2} \right)+{\Psi}^{\up}(\alpha)}.$$
\end{proof}

We note that, from the numerical point of view, it is irrelevant if we solve \eqref{eq_rho1} rather than \eqref{eq_rho2}, since both are Fredholm integral equations of the second kind, equally structured. 

Applying the same technique we obtain the analogous
\begin{theorem}
The function $\ell(\alpha)$ is the unique solution of the two structured Fredholm integral equations of the second kind:
\begin{align}
\ell(\alpha)+\int_0^\infty {\Phi}^{\up}(z) \, \ell(z+\alpha) d z &=-\frac 12 u_0\left(\frac{\alpha}{2}\right)-\Phi^{\dn}(\alpha) \label{eq_elle1}\\
\ell(\alpha)+\int_0^\infty {\Psi}^{\dn}(z) \, \ell(z+\alpha) d z &=-\frac 12 u_0\left(\frac{\alpha}{2}\right)-\bar{\Psi}^{\dn}(\alpha),\label{eq_elle2}
\end{align}
where  $\Psi^{\dn}$ is defined in \eqref{psi1}  and $\Phi^{\up}$ and $\Phi^{\dn}$ are given in \eqref{Phi1}-\eqref{Phi2}.
\end{theorem}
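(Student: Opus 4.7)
The plan is to mirror the proof of the preceding theorem for $\rho(\alpha)$, using the two representations of $L(\lambda)$ in \eqref{L}, namely $a_{\ell 4}(\lambda)L(\lambda) = a_{\ell 2}(\lambda)$ and $a_{r 1}(\lambda)L(\lambda) = -a_{r 2}(\lambda)$. Each representation will yield one of the two Fredholm equations in the statement.

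To derive \eqref{eq_elle1}, I would begin from $a_{\ell 4}(\lambda)L(\lambda) = a_{\ell 2}(\lambda)$ and rewrite each factor as a Fourier transform of a function of $\alpha$. By the same change of variable $\alpha = -z$ used in the $\rho$ proof,
\[
a_{\ell 4}(\lambda) = 1 + \ftran{\Phi^{\up}(-\alpha)H(-\alpha)}.
\]
Substituting $\alpha = -2y$ in $\int_\R e^{2i\lambda y}u_0(y)\,dy$ and $\alpha = -z$ in $\int_\R e^{i\lambda z}\Phi^{\dn}(z)\,dz$ then gives
\[
a_{\ell 2}(\lambda) = -\ftran{\tfrac12 u_0(-\alpha/2) + \Phi^{\dn}(-\alpha)}.
\]
The definition \eqref{elle} identifies $L$ as a Fourier transform of (a reflection of) $\ell$. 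Applying $\mathcal{F}^{-1}$ to the resulting product identity and invoking the convolution theorem transforms the $\lambda$-space identity into an equation in $\alpha$-space. Unpacking the convolution---the Heaviside factor restricts the integration to $z\ge 0$, and the reflection $\Phi^{\up}(-z)$ converts the natural $\ell$-at-a-difference into $\ell$-at-the-sum---and undoing the internal reflection $\alpha\mapsto -\alpha$ at the end produces \eqref{eq_elle1}.

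Equation \eqref{eq_elle2} is obtained by repeating the argument starting from $a_{r 1}(\lambda) L(\lambda) = -a_{r 2}(\lambda)$, with the analogous identifications
\[
a_{r 1}(\lambda) = 1 + \ftran{\Psi^{\dn}(-\alpha)H(-\alpha)}, \qquad a_{r 2}(\lambda) = \ftran{\tfrac12 u_0(-\alpha/2) + \bar\Psi^{\dn}(-\alpha)},
\]
obtained by the same substitutions. Uniqueness of $\ell$ follows, exactly as for $\rho$, from the Fredholm alternative applied to the resulting compact integral operator (whose kernel $\Phi^{\up}$ or $\Psi^{\dn}$ is in $L^1$ by the support/decay analysis of the auxiliary functions developed earlier).

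The main obstacle is the careful bookkeeping of sign conventions: because \eqref{elle} uses the exponential $e^{-i\lambda\alpha}$, opposite in sign to the convention in \eqref{rho}, every Fourier identification of $L$, $a_{\ell 2}$, and $a_{r 2}$ requires an internal reflection $\alpha\mapsto -\alpha$ that must be threaded through the convolution and then undone in the final step. Apart from this sign bookkeeping, the proof is entirely parallel to the preceding theorem, as indicated by the author's remark that the same technique applies.
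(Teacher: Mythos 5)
Your route is exactly the one the paper intends: the paper omits this proof precisely because it is ``analogous to the previous one, after noting that $L(\lambda)=\ftran{\ell(-\alpha)}$,'' and your identifications of $a_{\ell 4}$, $a_{\ell 2}$, $a_{r1}$, $a_{r2}$ as Fourier transforms are correct. The gap is in the last step --- precisely the sign bookkeeping you flag as ``the main obstacle'' and then assert rather than carry out. Work with $g(\alpha)=\ell(-\alpha)$, so that $L=\ftrann{g}$. The convolution theorem applied to $a_{\ell4}(\lambda)L(\lambda)=a_{\ell2}(\lambda)$ gives
\begin{equation*}
g(\alpha)+\int_0^\infty\Phi^{\up}(z)\,g(\alpha+z)\,dz=-\tfrac12\, u_0\!\left(-\tfrac{\alpha}{2}\right)-\Phi^{\dn}(-\alpha).
\end{equation*}
To restore the right-hand side to the form $-\tfrac12 u_0(\alpha/2)-\Phi^{\dn}(\alpha)$ appearing in \eqref{eq_elle1} you must substitute $\alpha\mapsto-\alpha$, and under that substitution $g(\alpha+z)=\ell(-\alpha-z)$ becomes $\ell(\alpha-z)$, not $\ell(\alpha+z)$. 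Equivalently and more transparently: since $L(\lambda)=\int_\R \ell(\alpha)e^{+i\lambda\alpha}\,d\alpha$ whereas $R(\lambda)=\int_\R\rho(\alpha)e^{-i\lambda\alpha}\,d\alpha$, the factor $e^{i\lambda z}$ in $a_{\ell4}$ combines with $e^{i\lambda\alpha}$ to shift $\ell$ in the direction opposite to the shift of $\rho$ in \eqref{eq_rho1}. The equation your derivation actually yields is therefore
\begin{equation*}
\ell(\alpha)+\int_0^\infty\Phi^{\up}(z)\,\ell(\alpha-z)\,dz=-\tfrac12\, u_0\!\left(\tfrac{\alpha}{2}\right)-\Phi^{\dn}(\alpha),
\end{equation*}
and likewise with $\Psi^{\dn}$ and $\bar{\Psi}^{\dn}$ for the second equation. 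There is no way to obtain simultaneously the printed right-hand side and the argument $\ell(z+\alpha)$ under the integral: the printed statement appears to carry this very sign slip, but a proof must either derive the statement as written (which this route cannot do) or record the discrepancy explicitly, rather than claim that ``undoing the internal reflection \dots produces \eqref{eq_elle1}.'' Your uniqueness remark via the Fredholm alternative is reasonable and goes beyond what the paper records, but it does not affect the issue above.
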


We omit the proof, as it is analogous to the previous one, after noting that
$ L(\lambda)=\ftran{\ell(-\alpha)}.$

\section{The numerical method}
\label{sect:nummethod}
Let us now assume, for computational simplicity, that the support of the initial solution is bounded, that is 
\begin{equation}\label{assu0}
u_0(x)=0, \quad \text{for} \quad |x|>L, 
\end{equation}
which can be considered acceptable whenever $u_0(x) \to 0$ for $|x| \to \infty$, provided that $L$ is taken large enough. This hypothesis, as in part already proved in \cite{Fermo2014-PUB}, allows us to greatly simplify the algorithms for the computation of the auxiliary functions and also those for the computation of the Marchenko kernels and the Fourier transforms of the reflection coefficients. 

The method we propose provides successively the numerical solution  of:
\begin{enumerate}
\item the four systems \eqref{nucleiKbarrati}-\eqref{nucleiM} of Volterra integral equations for the computation of the four pairs of auxiliary functions;
\item the two Volterra integral equations \eqref{Mar_K}-\eqref{Mar_M} for the computation of the Marchenko kernels from the left and from the right $\Omega_\ell$ and $\Omega_r$, respectively;
\item the transition matrices from the left and from the right, the scattering matrix and then the inverse
 Fourier transforms $\rho$  of the reflection coefficients from the right $R$ and the Fourier transform $\ell$ of the reflection coefficient from the left $L$.
\end{enumerate}

Once  the Marchenko kernels $\Omega_\ell(\alpha)$ and $\Omega_r(\alpha)$ and the functions $\rho(\alpha)$ and $\ell(\alpha)$ have been obtained, the bound states $\{\lambda_j\}_{j=1}^n$ with their multiplicities $\{m_j\}_{j=1}^n$ and the norming constants $\{(\Gamma_\ell)_{js}, (\Gamma_r)_{js}\}$ are computed by applying to the monomial-exponential sums \eqref{S_ell}-\eqref{S_r} the matrix-pencil method proposed in \cite{Fermo2014-ETNA} and \cite{Fermo2014-AMC}.

\subsection{Auxiliary functions computation}
As said before, our numerical method for the solution of the Volterra systems \eqref{nucleiKbarrati}-\eqref{nucleiM} is greatly influenced by the hypothesis \eqref{assu0}. It implies a reduction of the auxiliary function supports, which allows us to develop algorithms that are simpler and numerically stable.

As proved in \cite{Fermo2014-PUB}, $\bar{K}^{\up}$ and $\bar{K}^{\dn}$ have the supports depicted in Figure \ref{support1}. Taking into account the symmetry properties \eqref{symmpropr1} or \eqref{symmpropr2}  of systems \eqref{nucleiKbarrati} and \eqref{nucleiK}, it is immediate to check that $supp(K^{\up})=supp(\bar{K}^{\dn})$ and $supp(K^{\dn})=supp(\bar{K}^{\up})$.

\begin{figure}[t]
\includegraphics[scale=0.36]{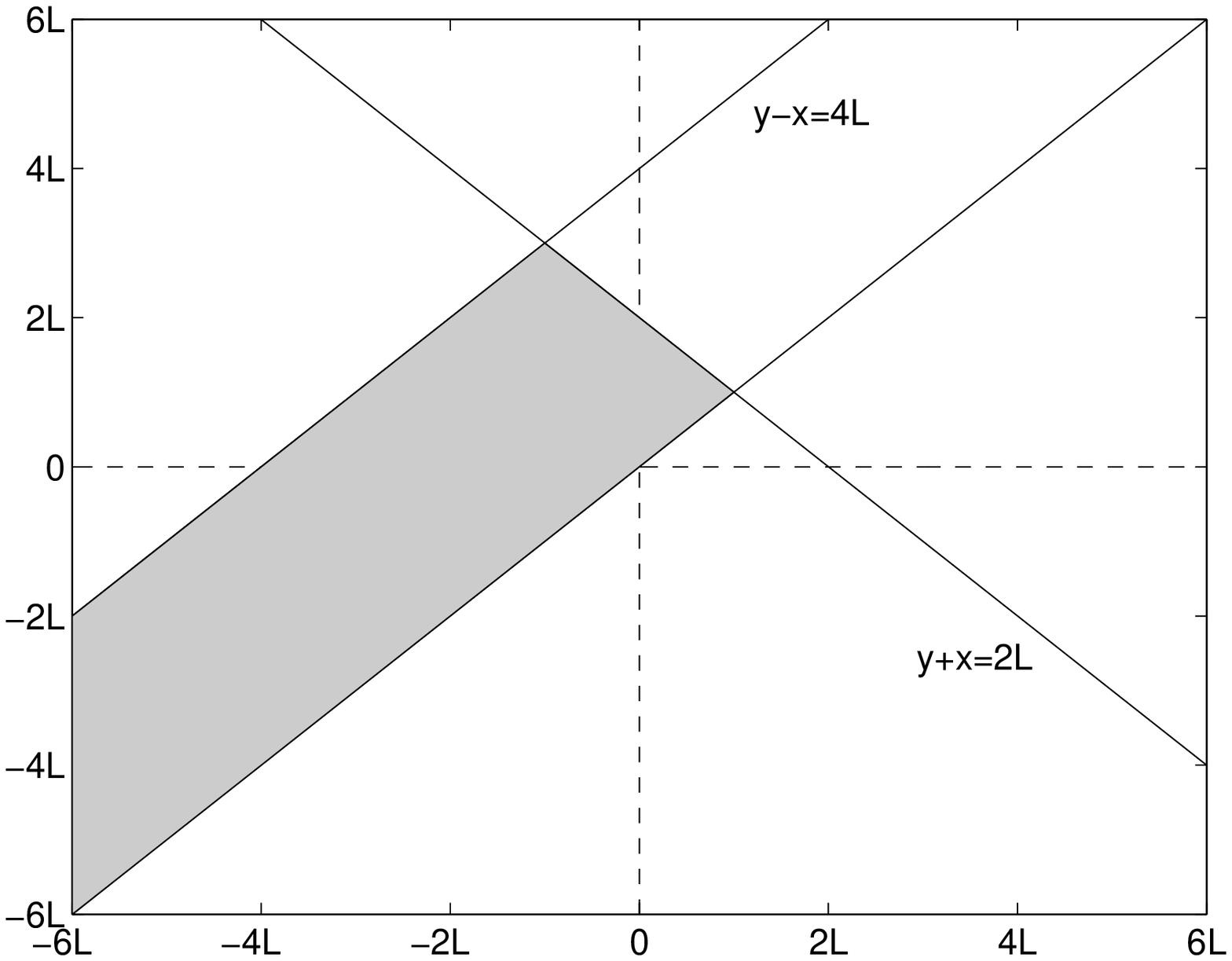}
\includegraphics[scale=0.36]{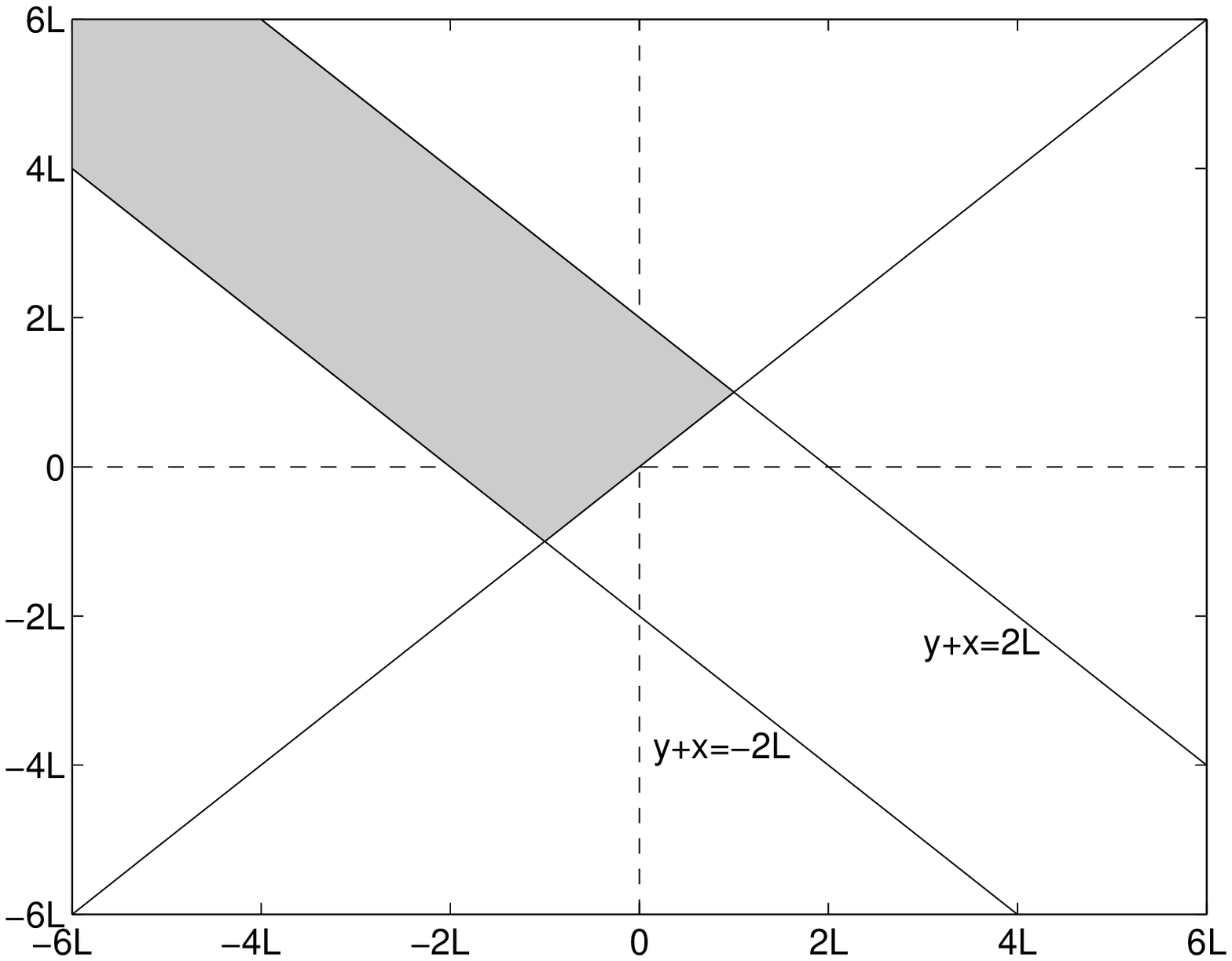}
\caption{\small Supports of the auxiliary functions $\bar{K}^{\up}$ and $K^{\dn}$ \label{support1} (to the left) and $\bar{K}^{\dn}$ and $K^{\up}$ (to the right)}
\end{figure}

For the numerical solution of system \eqref{nucleiKbarrati}, the following properties,  proved in \cite{Fermo2014-PUB}, are also important:
\begin{itemize}
\item[1.] If $x \leq -L$, whatever $h$, $\bar{K}^{\up}(x,y)$ and $K^{\dn}(x,y)$ are both constant on the line $y=x+h$.  For this reason we put $\bar{K}^{\up}(x,x+h)= {\mathcal{C}}^{\up}_{\bar{K},h}$ and ${K}^{\dn}(x,x+h)= {\mathcal{C}}^{\dn}_{K,h}$ for each given value $h$.
\item[2.] If $x<-L$ and $x+y>-2L$, $\bar{K}^{\dn}(x,y)$ and $K^{\up}$ are both  constant on each line $x+y=-2(L-h)$ for each $0<h<2L$. 
\end{itemize}
\begin{figure}[t]
\includegraphics[scale=0.36]{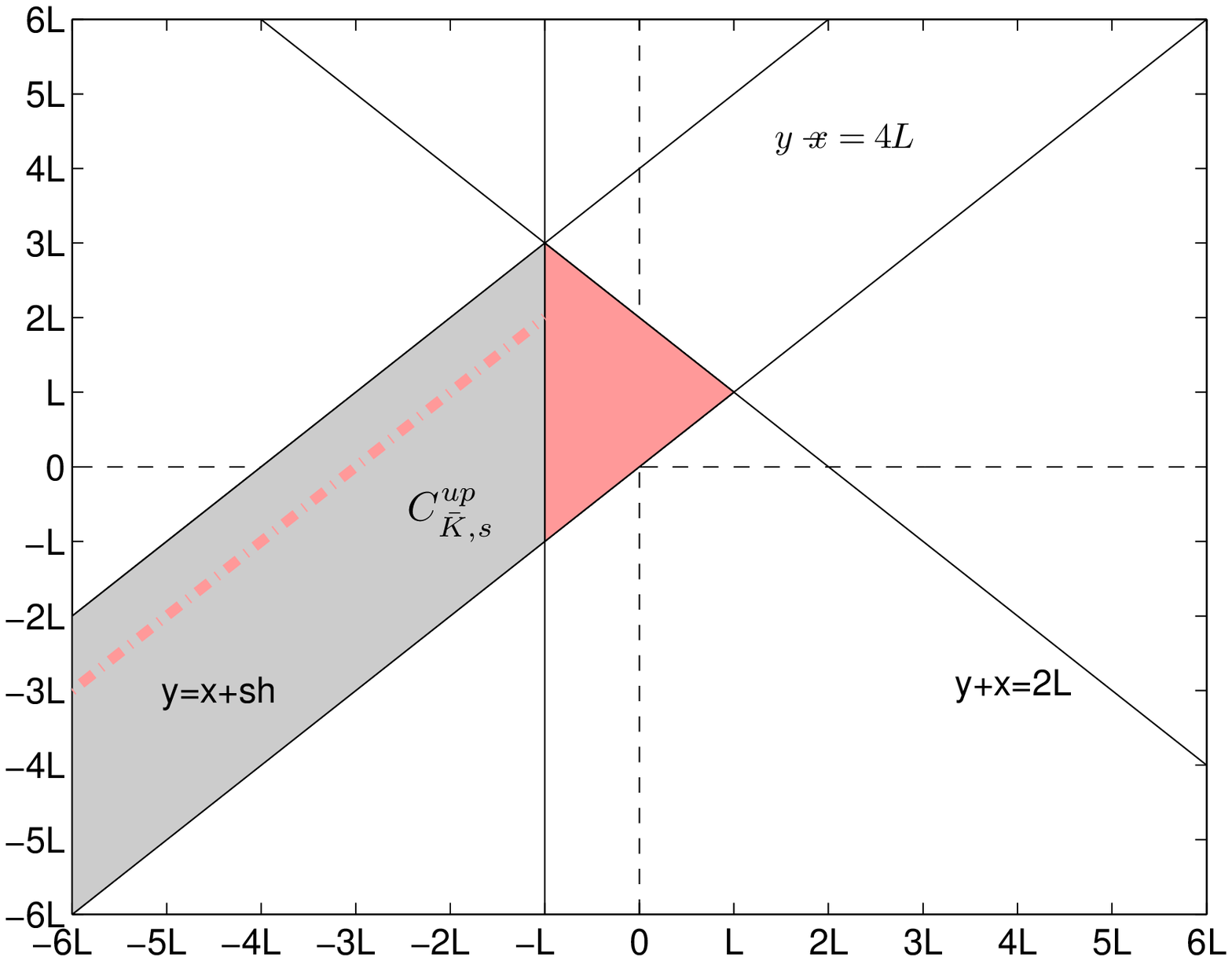}
\includegraphics[scale=0.36]{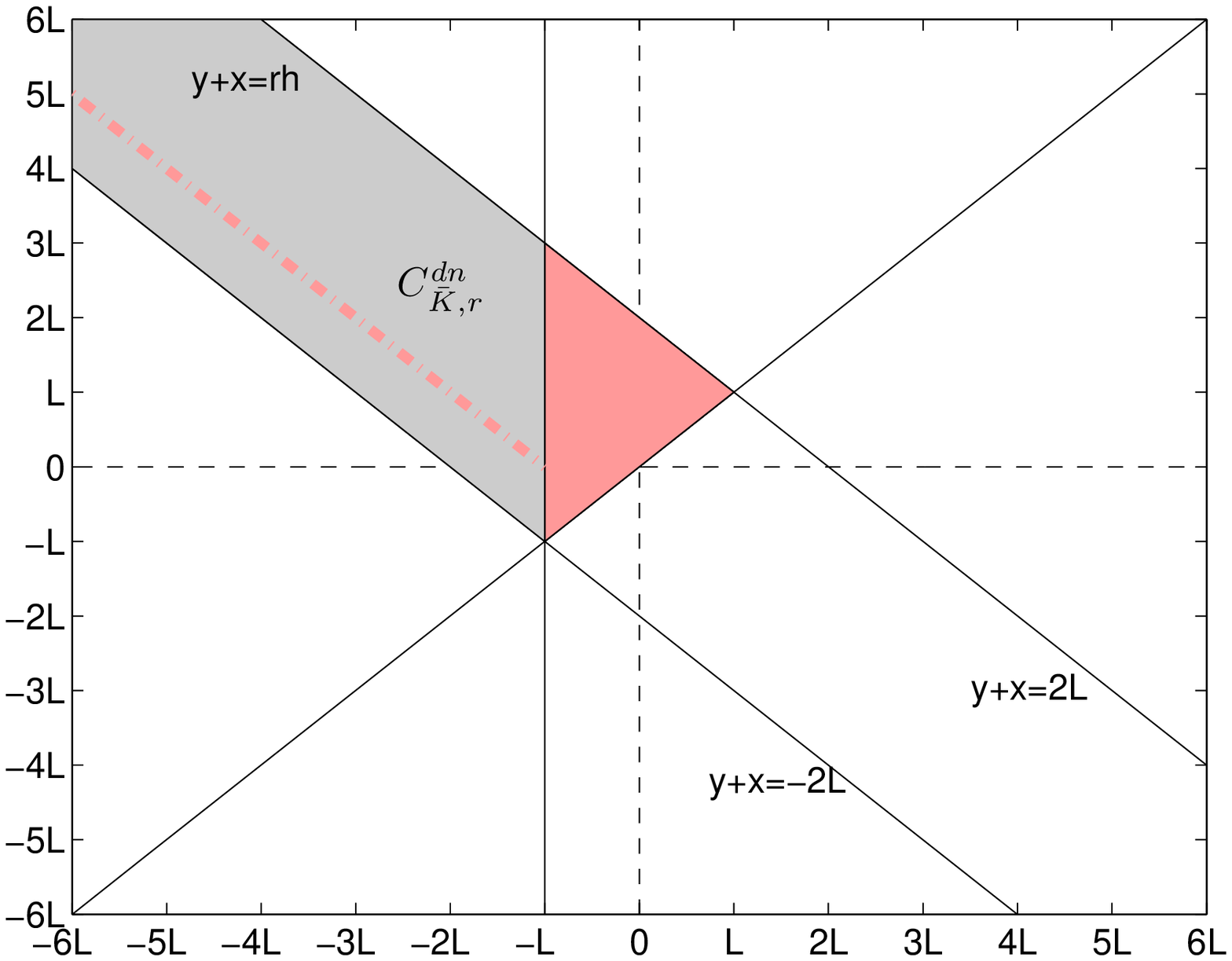}
\caption{Additional properties of $\bar{K}^{\up}$ and $K^{\dn}$ (to the left) and $\bar{K}^{\dn}$ and $K^{\up}$ (to the right) \label{property1}}
\end{figure}
These two results are graphically represented in Figure \ref{property1}, where $\bar{K}^{\dn}(x,x+h)=\mathcal{C}^{\dn}_{\bar{K},h}$ and $K^{\up}(x,x+h)=\mathcal{C}^{\up}_{K,h}$.

Analogous considerations, based on  results reported in \cite{Fermo2014-PUB} allow us to claim that the supports of $(\bar{M}^{\up}, \bar{M}^{\dn})$ are those depicted in Figure \ref{support2}. As for $(\bar{K}^{\up}, \bar{K}^{\dn})$ and $(K^{\up}, K^{\dn})$ as for the pairs $(\bar{M}^{\up}, \bar{M}^{\dn})$  we have additional properties very useful from the numerical point of view. With obvious meaning of the symbols, they are reported in Figure \ref{property2}.

\begin{figure}[t!]
\includegraphics[scale=0.36]{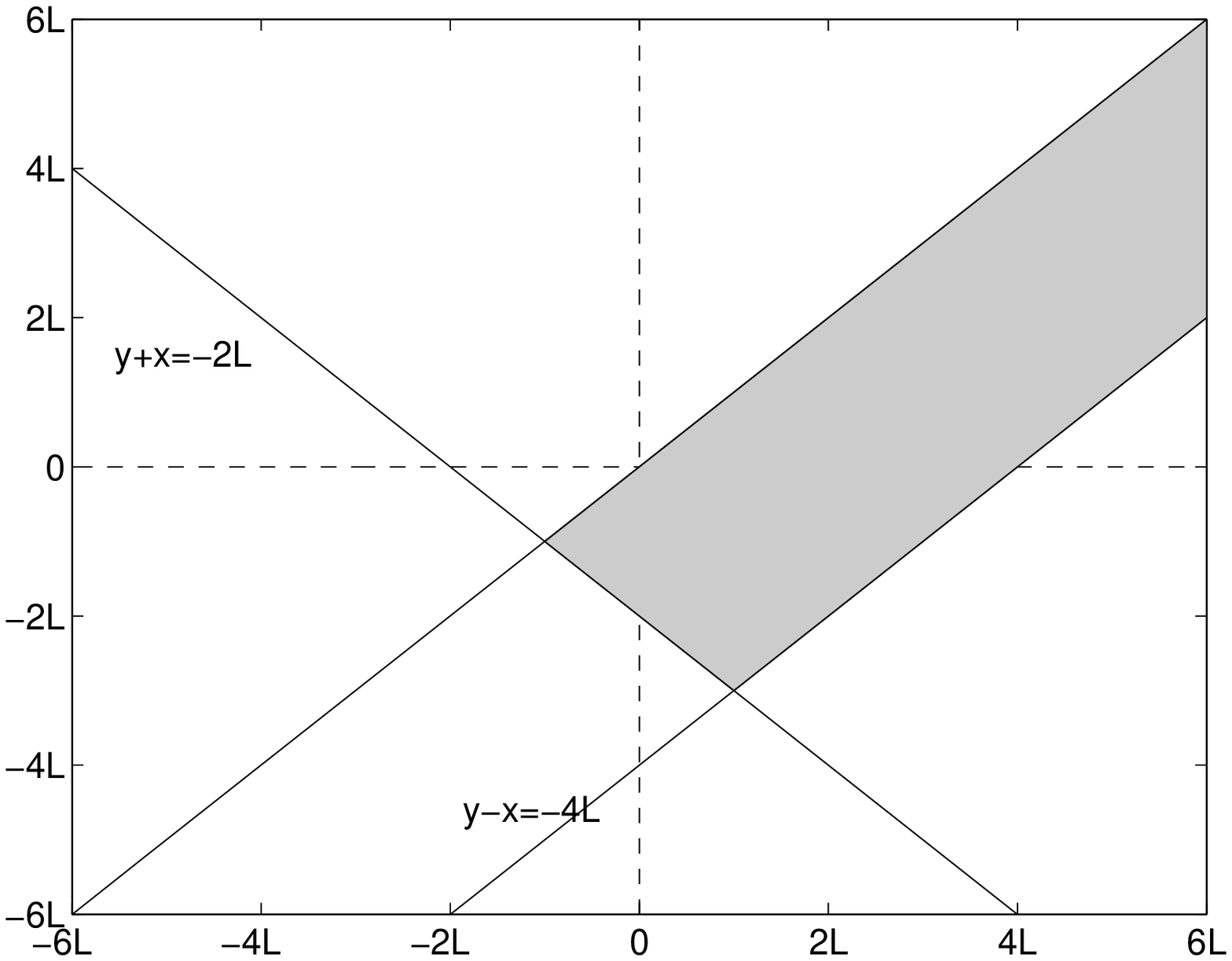}
\includegraphics[scale=0.36]{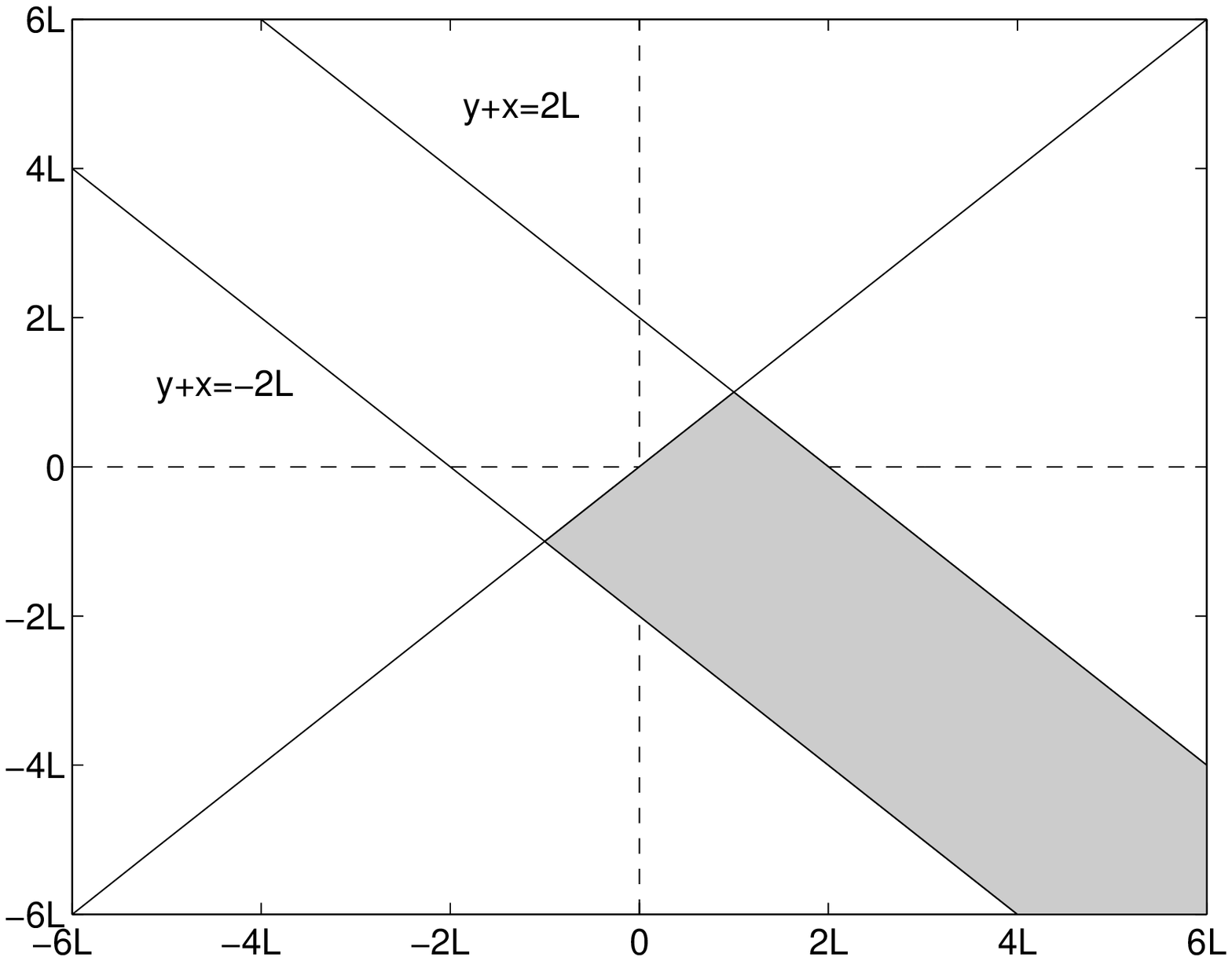}
\caption{\small Supports of the auxiliary functions ${M}^{\up}$ and $\bar{M}^{\dn}$ (to the left) and ${M}^{\dn}$ and $\bar{M}^{\up}$ (to the right) \label{support2}}
\end{figure}

\begin{figure}[t!]
\includegraphics[scale=0.36]{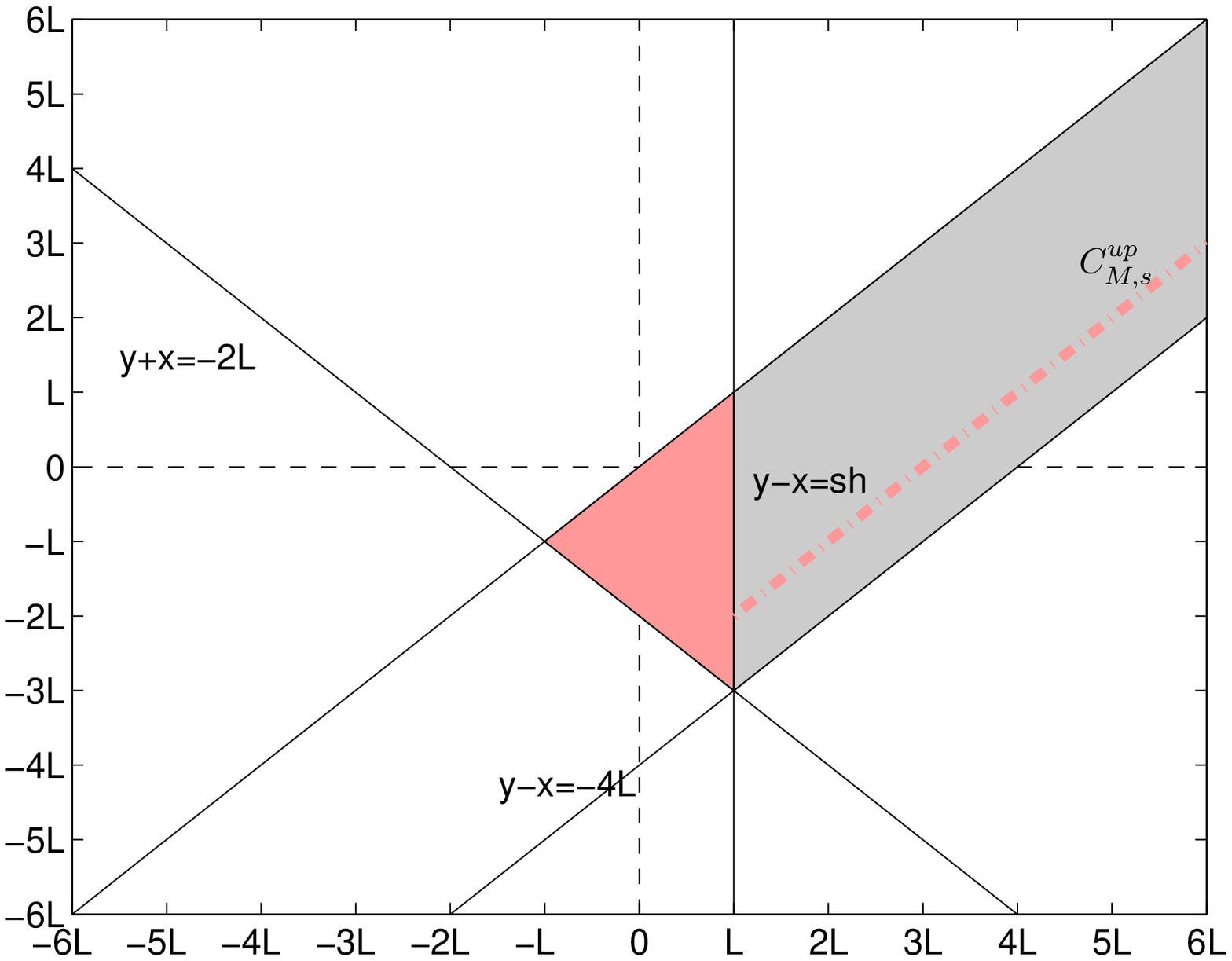}
\includegraphics[scale=0.36]{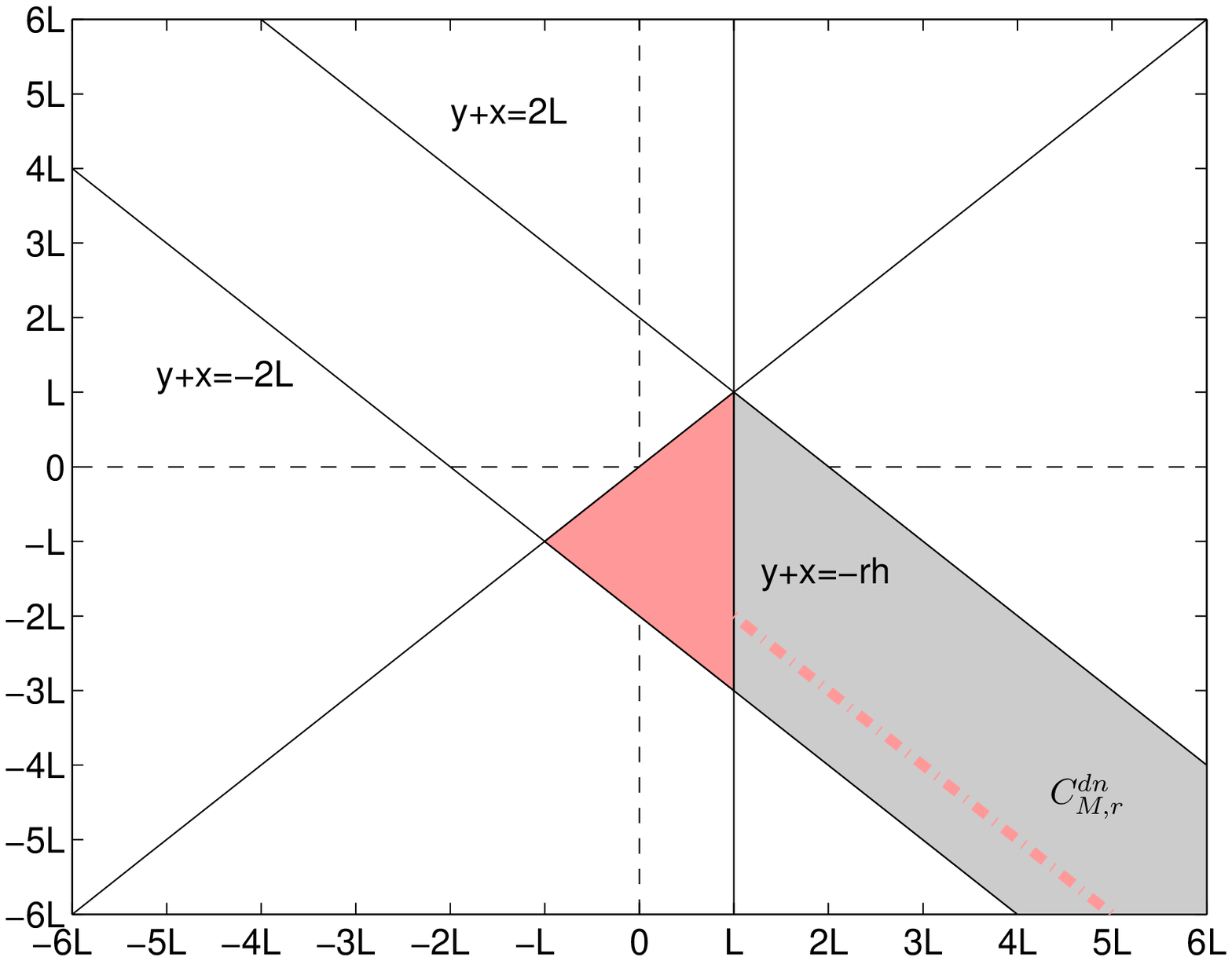}
\caption{Additional properties of ${M}^{\up}$ and $\bar{M}^{\dn}$ (to the left) and ${M}^{\dn}$ and $\bar{M}^{\up}$ (to the right)   \label{property2}}
\end{figure}

A simple inspection of Figures \ref{support1} and \ref{support2} makes it evident that the area where we need to compute $\bar{K}^{\up}$ and $\bar{K}^{\dn}$, as well as $K^{\up}$ and $K^{\dn}$,  is given by the orange triangle represented in  Figure \ref{square}. In the remaining areas of the respective supports their values are immediately obtained by using those of the orange triangle. The orange line shows, in particular, the values of the orange triangle we use to compute ($\bar{K}^{\up}, \bar{K}^{\dn}$) and ($K^{\up}, K^{\dn}$) in the point of the gray area. Similar considerations hold true for the computational area of the pairs ($\bar{M}^{\up}, \bar{M}^{\dn}$) and ($M^{\up}, M^{\dn}$) which is depicted in Figure \ref{square2}, with the analogous meaning of the symbols.

\begin{figure}[t]
\includegraphics[scale=0.36]{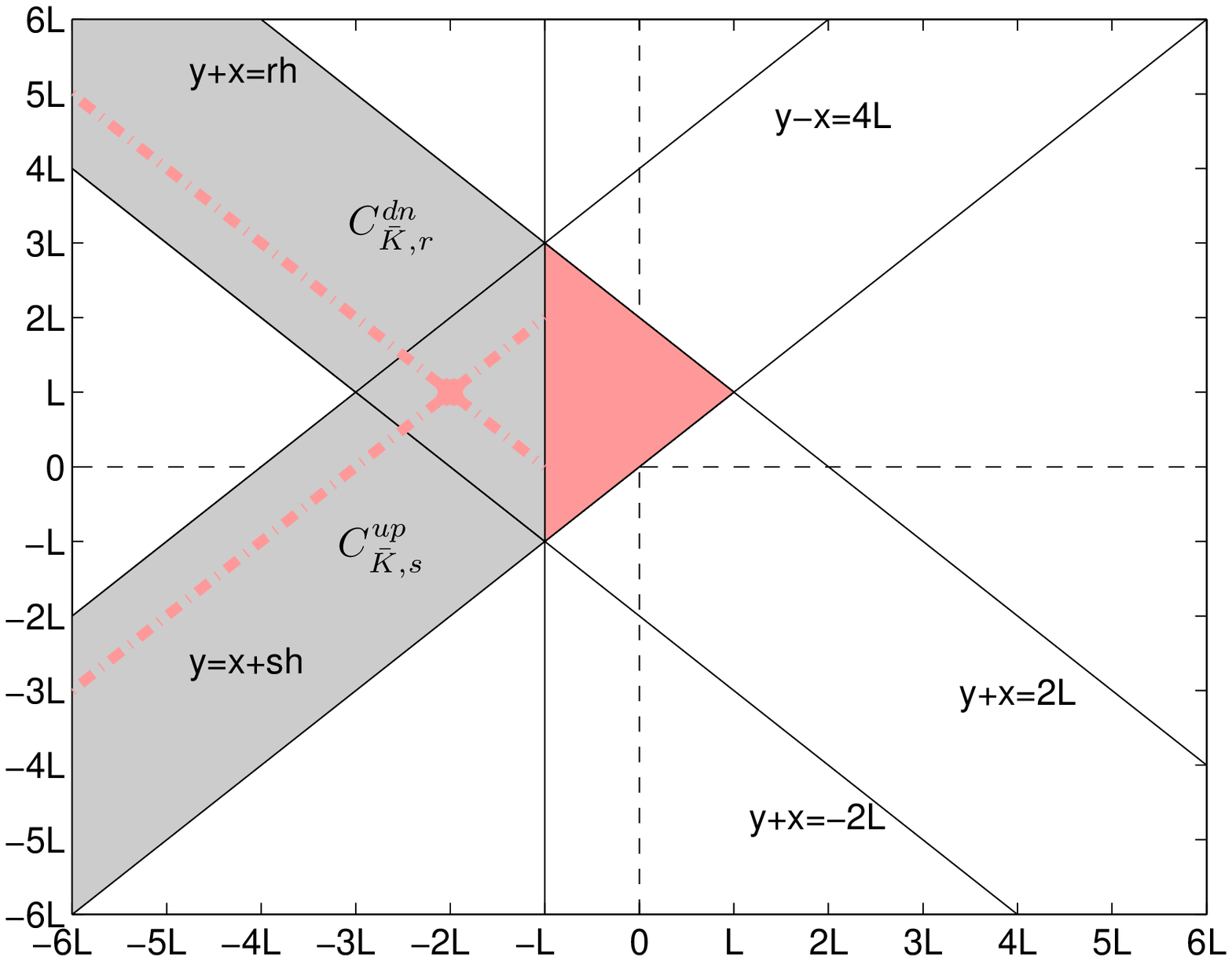}
\includegraphics[scale=0.36]{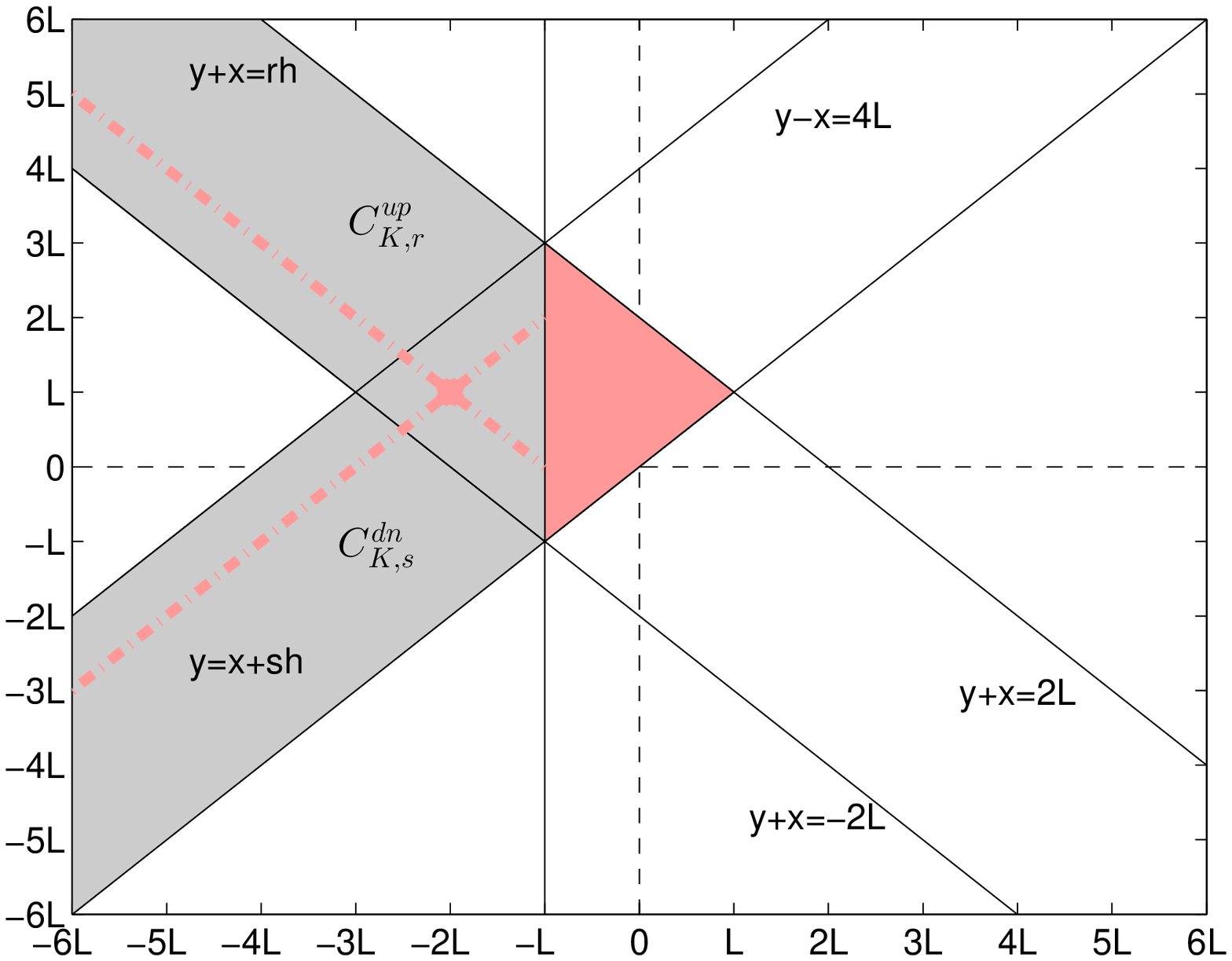}
\caption{Geometrical visualization of the computational area of $\bar{K}^{\up}$ and $\bar{K}^{\dn}$ (to the left) and ${K}^{\up}$ and ${K}^{\dn}$ (to the right) \label{square}}
\end{figure}

\begin{figure}[t!]
\includegraphics[scale=0.36]{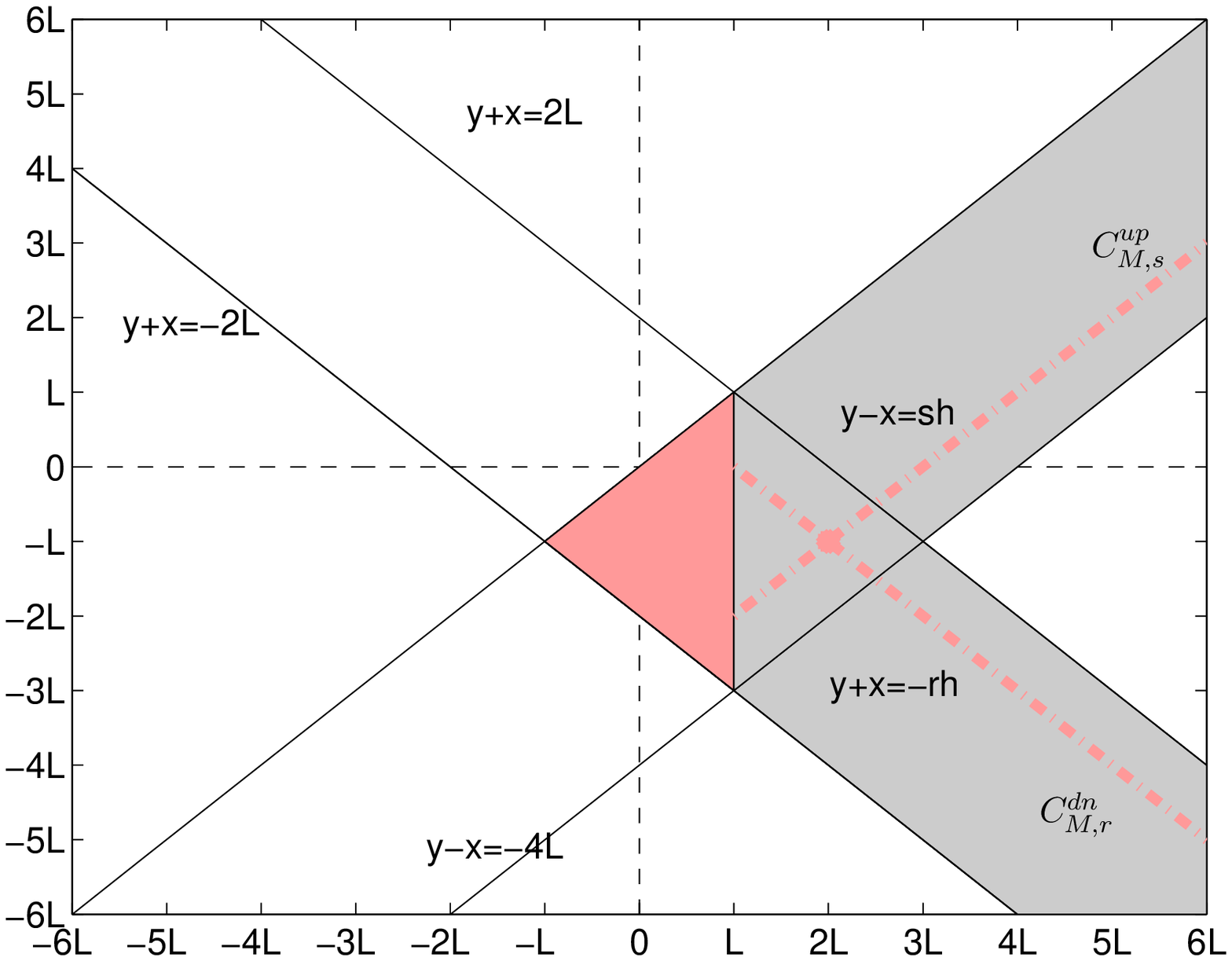}
\includegraphics[scale=0.36]{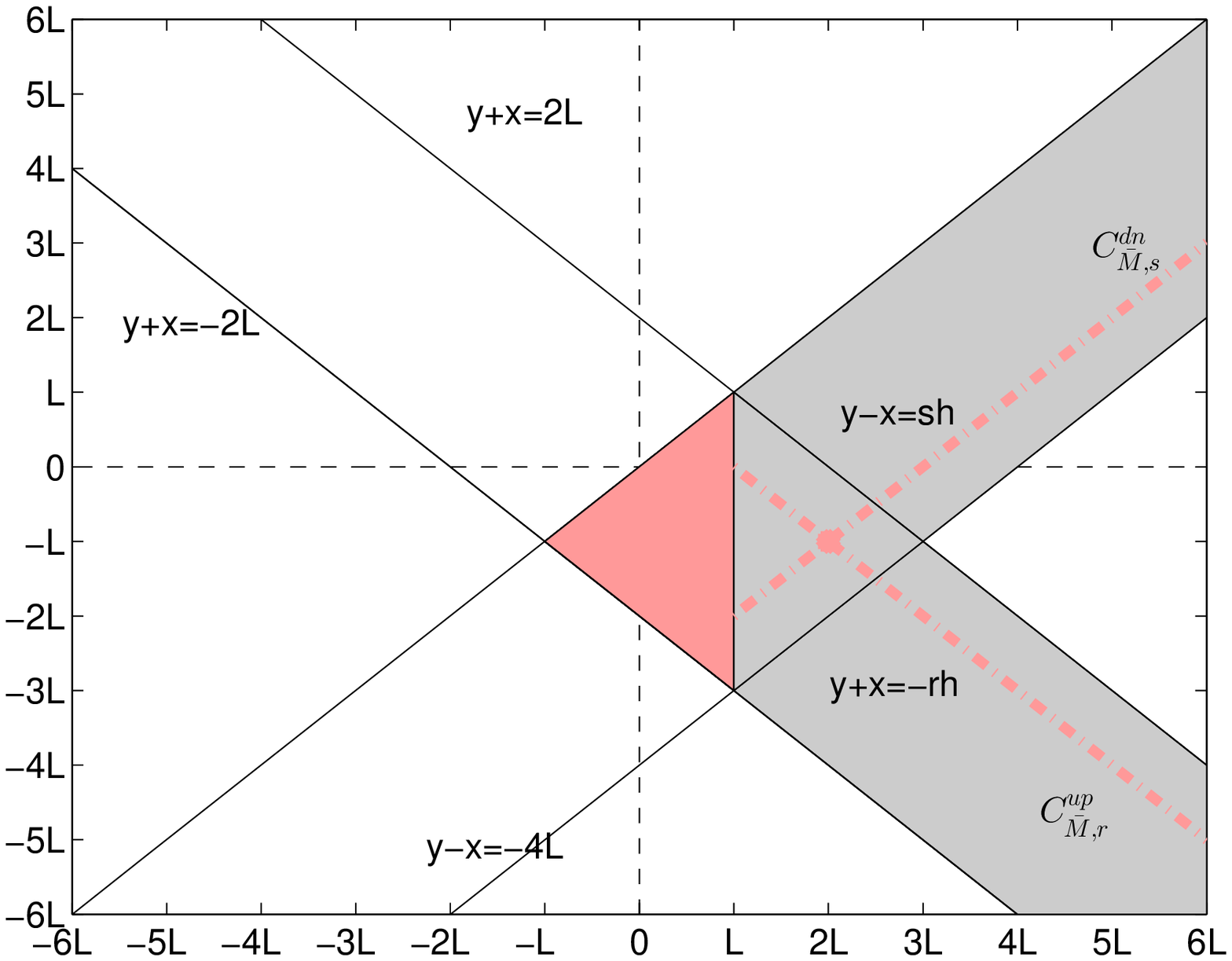}
\caption{Geometrical visualization of the computational area of $\bar{M}^{\up}$ and $\bar{M}^{\dn}$ (to the left) and ${M}^{\up}$ and ${M}^{\dn}$ (to the right)  \label{square2}}
\end{figure}

\begin{figure}[t!]
\includegraphics[scale=0.37]{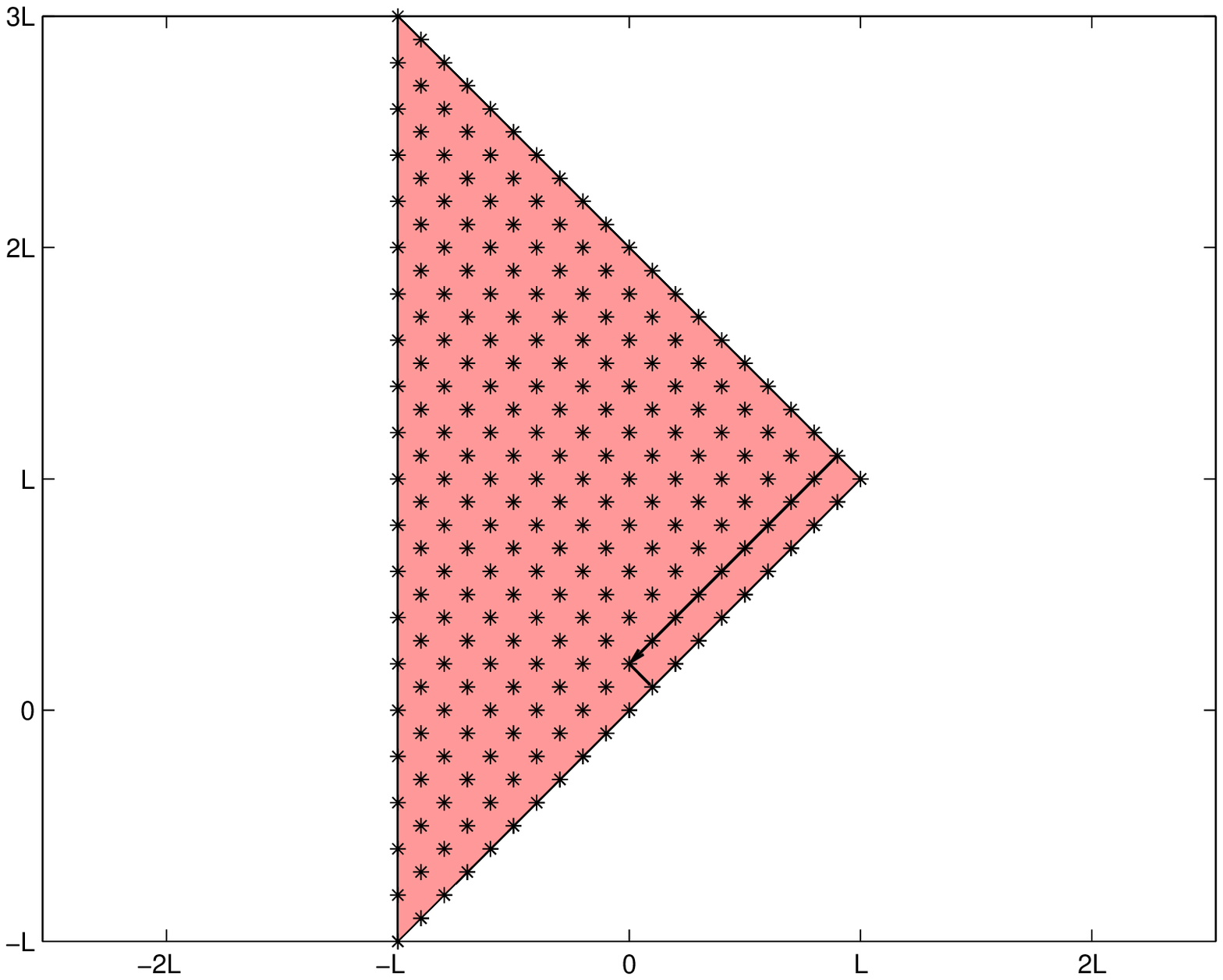} 
\includegraphics[scale=0.37]{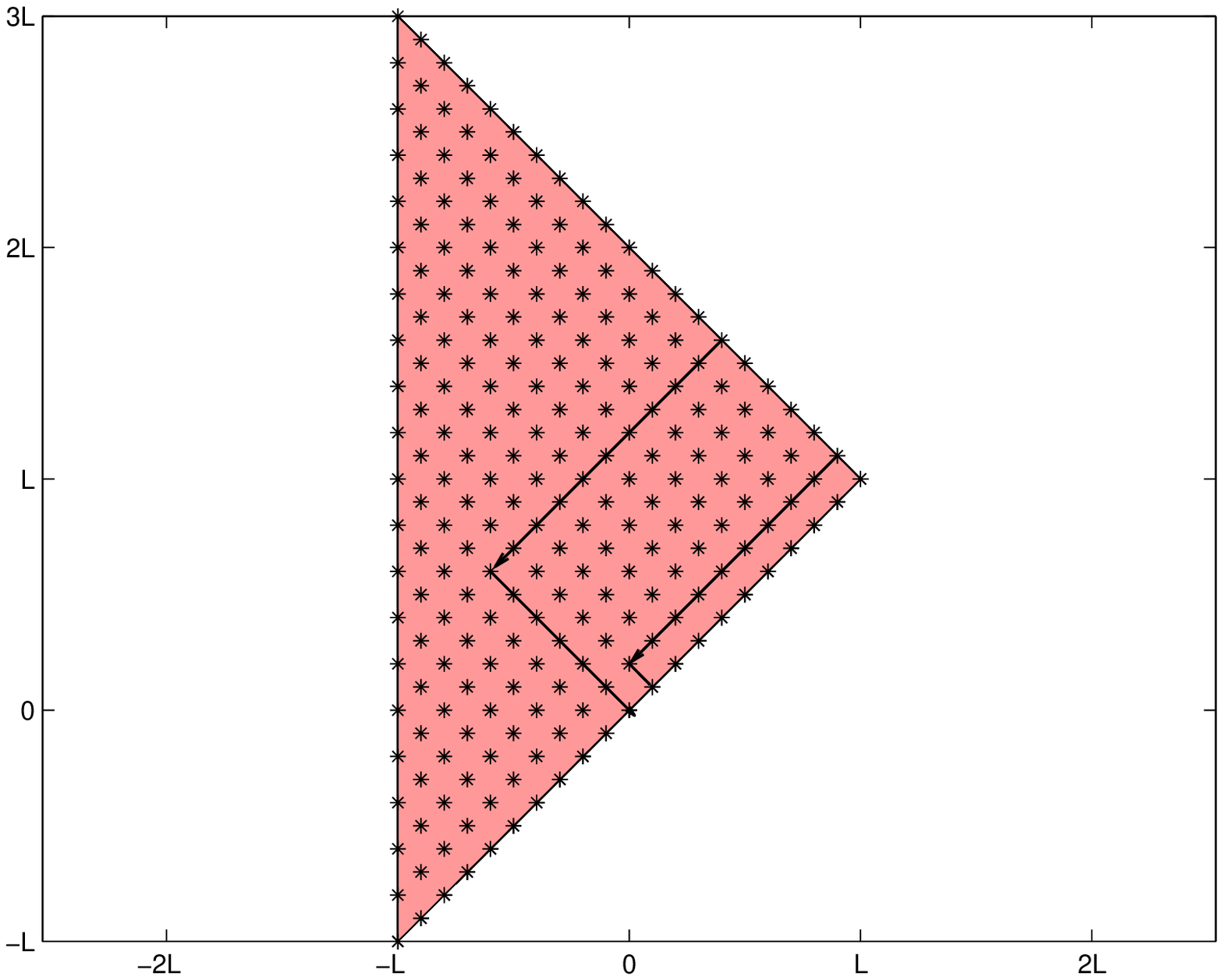} 
\caption{Sorting visualization of collocation points  in the triangle of $\bar{K}^{\up}$, $\bar{K}^{\dn}$, ${K}^{\up}$ and ${K}^{\dn}$  \label{collocazione}}
\end{figure}

\begin{figure}[t!]
\includegraphics[scale=0.37]{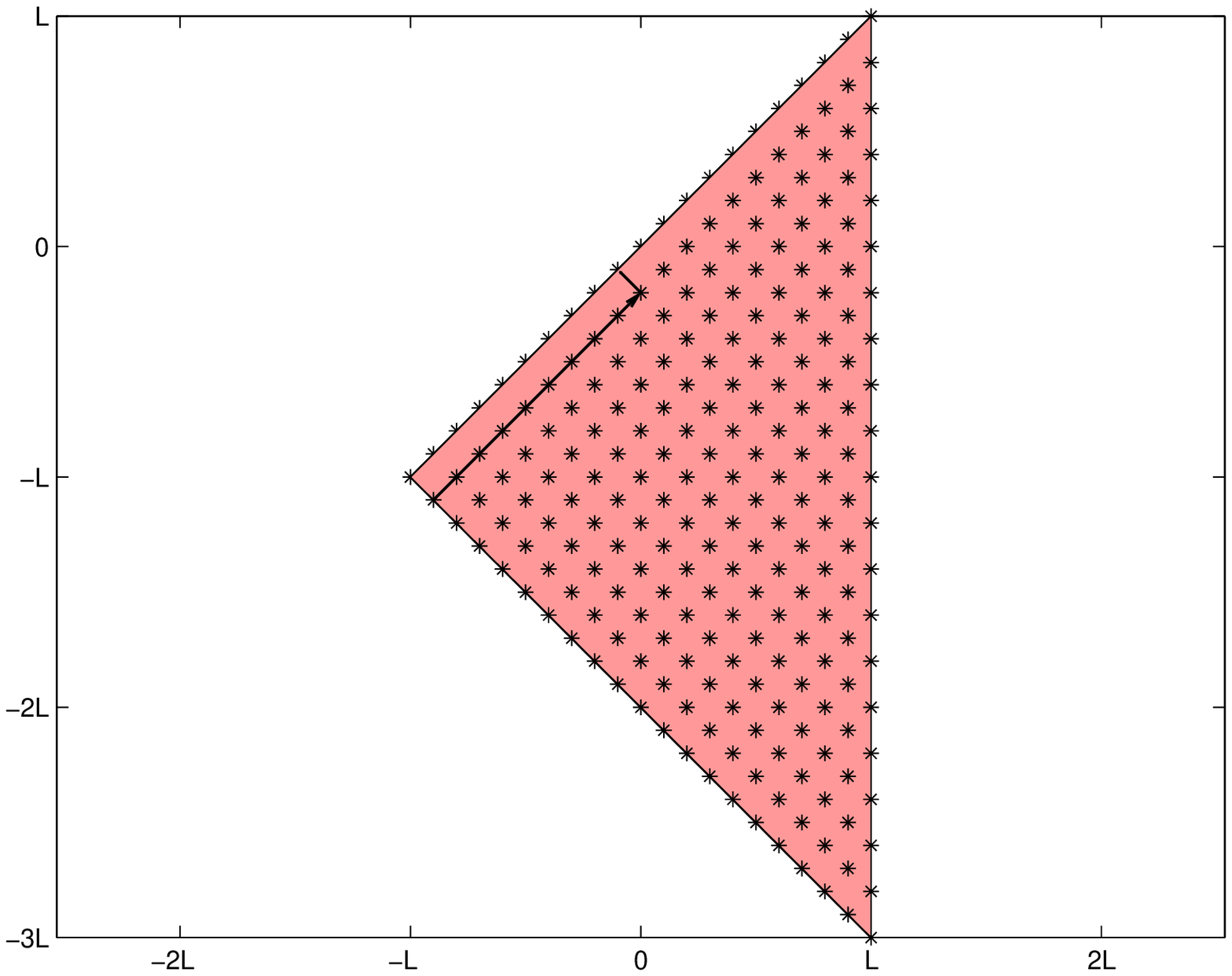}
\includegraphics[scale=0.37]{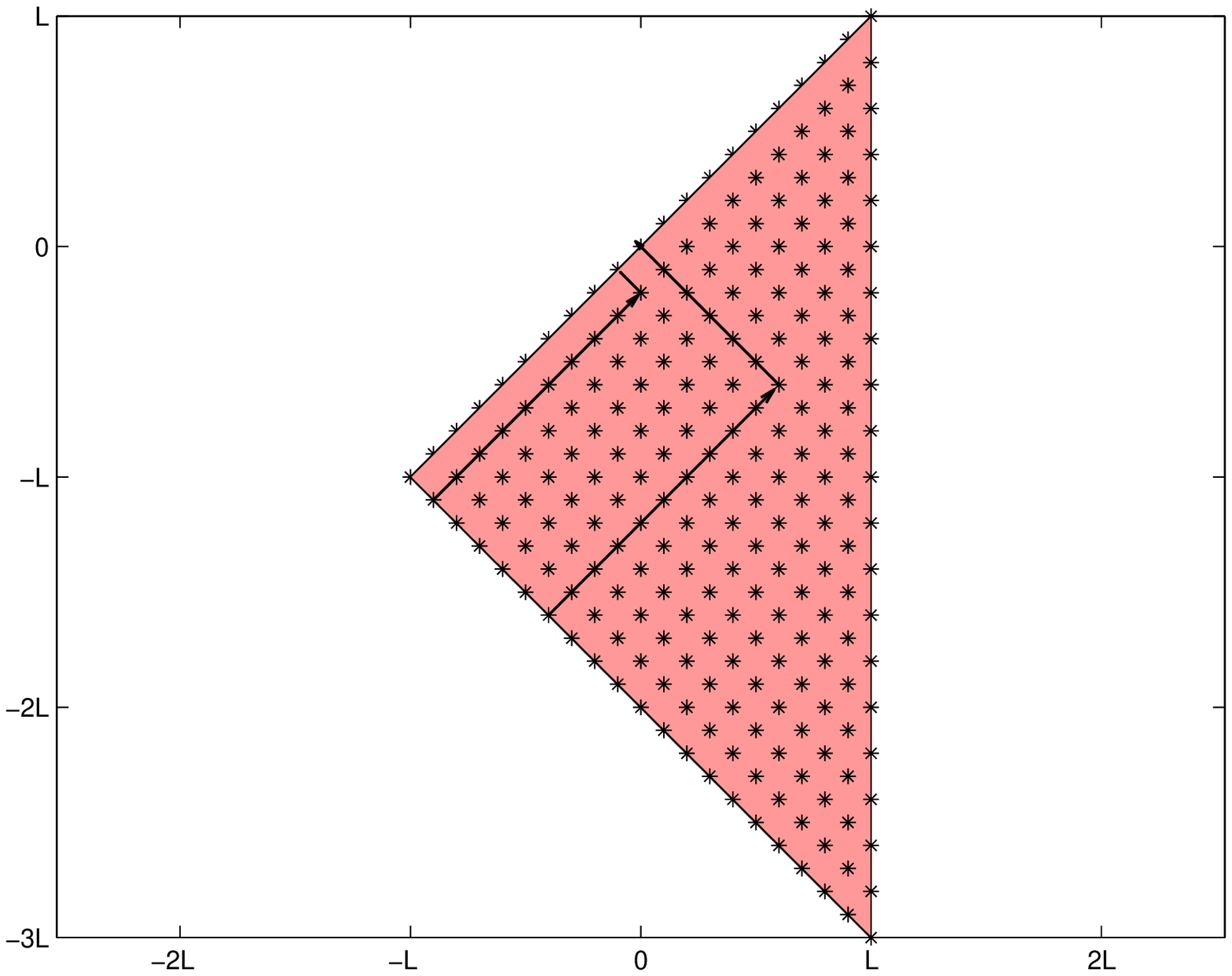} 
\caption{Sorting visualization of collocation points in the trangle of $\bar{M}^{\up}$, $\bar{M}^{\dn}$, ${M}^{\up}$ and ${M}^{\dn}$  \label{collocazioneM}}
\end{figure}

\subsection*{Algorithm} 
Given the initial solution $u_0$ and $v_0=u_0^*$ in the focusing case or $v_0=-u_0^*$ in the defocusing case, we have to solve  Volterra systems \eqref{nucleiKbarrati}-\eqref{nucleiM}. 

Let us start with the numerical solution of system \eqref{nucleiKbarrati}.
As noted before, under the hypothesis \eqref{assu0}, we can limit ourselves to solve this system in the triangular computational area represented in Figure \ref{square}, as the values of $\bar{K}^{\up}$ and $\bar{K}^{\dn}$ in the remaining parts of their support are then automatically known. 

The algorithm that we propose in this paper is more effective that the one reported in \cite{Fermo2014-PUB} whose aim was simply to check the effectiveness of our approach, highlighting the mathematical problems to overcome to obtain a satisfactory solution of the problem. 
Though the collocation strategy is the same used in \cite{Fermo2014-PUB}, the algorithm used here is more complex and effective.  In fact, it is based on the combined use of the trapezoidal rule, the composite Simpson quadrature formula and the  $3/8$ Simpson quadrature rule \cite[Section 3.1]{Stoer}, instead of only the composite trapezoidal quadrature formula used there. 

The first step is to fix a proper mesh in the computational area which can be done by fixing $n \in \mathbb{N}$, taking $h=\frac{L}{n}$ and introducing the following mesh points:
$$\mathcal{D}_k=\left \{ (x_i,x_{i+2k}),  \qquad x_i=ih, \quad i=n-k,n-k-1,...,-n+1,-n \right \}$$
where the index $k=0,\, \dots,\,2n$ identifies the line $y=x+2kh$ on which we want to compute the unknown functions, whereas $i$ labels the abscissa of the $i$-th mesh point on the line.

For the sake of simplicity, let us hereafter write $u$ and $v$ in place of $u_0$ and $v_0$, respectively. The computational strategy requires us to compute first  $\bar{K}^{\up}$ and $\bar{K}^{\dn}$ in the nodal points of the bisector $(x_i,x_i)$.
Consequently, recalling \eqref{propr1} and denoting by $\bar{K}^{\up}_{r,s}$, $\bar{K}^{\dn}_{r,s}$ the approximation of $\bar{K}^{\up}(x,y)$, $\bar{K}^{\dn}(x,y)$ in the nodal points of $\mathcal{D}_0$, we can write
\begin{align*}
\bar{K}^\up_{i,i} &= -\frac 12 \int_{{x}_{i}}^\infty u(z) v(z) \, dz=-\frac 12 \int_{{x}_{i}}^{x_{n+1}} u(z) v(z) \, dz \\ 
 \bar{K}^\dn_{i,i}&=\frac 12 v_{i}, \qquad i=n,n-1,...,\,-n+1,-n.
\end{align*} 
To approximate the above integral, it is convenient to use different quadrature formulae, according to the node $x_i$. More precisely for:
\begin{itemize}
\item[$\bullet$] $i=n$, being involved only two nodal points, we use the trapezoidal rule
$$\bar{K}^\up_{i,i} = -\frac{h}{4} \{ u_{n} v_n+ u_{n+1}v_{n+1}\}= -\frac{h}{4}  u_{n} v_n$$ as, for \eqref{propr1},  $u_{n+1}v_{n+1}=0$;
\item[$\bullet$]  $i=n-\ell$, $\ell=1,\,3,\,5,\,\dots,\,2n-1$, we apply the composite Simpson rule. Recalling that $u_{n+1}=v_{n+1}=0$, we then obtain
$$\bar{K}^\up_{i,i} = \frac{h}{3} \left[ u_{i}v_i+4 \sum_{j=1}^{\frac{\ell+1}{2}} u_{i+2j-1} v_{i+2j-1} + 2 \sum_{j=1}^{\frac{\ell+1}{2}-1} u_{i+2j} v_{i+2j} \right];$$
\item[$\bullet$] $i=n-\ell$, $\ell=2,\,4,\,6,\,\dots,\,2n$, noting that
\begin{align*}
\int_{{x}_{i}}^{x_{n+1}} u(z) v(z) \, dz=\left \{\int_{x_i}^{x_{i+3}}+ \int_{x_{i+3}}^{x_{n+1}} \right \} u(z)v(z) dz,
\end{align*} 
and that the first integral involves four nodes, while the second involves an odd number of nodes, we can apply the $3/8$ Simpson rule \cite[p.128]{Stoer}  for computing the first integral and  the composite Simpson quadrature formula for the second one. Hence, recalling again that $u_{n+1}=v_{n+1}=0$, we have
\begin{align*}
\bar{K}^\up_{i,i} & = \frac{3}{8}h \left[u_i v_i+ 3 u_{i+1} v_{i+1}+3 u_{i+2} v_{i+2} +u_{i+3}v_{i+3} \right] \\ & + \frac{h}{3} \left[ u_{i+3}v_{i+3}+4 \sum_{j=1}^{\frac{\ell}{2}} u_{i+2+2j} + 2 \sum_{j=1}^{\frac{\ell}{2}-1} u_{i+3+2j} v_{i+3+2j} \right].
\end{align*}
\end{itemize}

Once $\bar{K}^{\up}$ and $\bar{K}^{\dn}$ on the nodal points of the bisector $y=x$ are known, to evaluate them on the nodal points of the parallel lines to the bisector, we collocate system \eqref{nucleiKbarrati} on the nodes of the mesh $(x_i,x_{i+2k})$, taking  successively $k=1,\, \dots,\,2n$ and, fixing $k$, assuming  $i=n-k,\,\dots,\,-n+1,-n$. Hence, we can write
\begin{equation*}
\begin{cases}
\bar{K}^\up_{i,i+2k}+\displaystyle\int_{x_i}^\infty u(z) \, \bar{K}^\dn(z,z+2kh) \, dz=0, \\ \\
\bar{K}^\dn_{i,i+2k}-\displaystyle\int_{x_i}^{x_{i+k}} v(z) \, \bar{K}^\up(z,2(i+k)h-z) \, dz=\tfrac{1}{2} v_{i+k}.
\end{cases}
\end{equation*}
These formulae, taking into account the support of the  functions involved (Figure \ref{square}), reduce to
\begin{equation*}
\begin{cases}
\bar{K}^\up_{i,i+2k}+\displaystyle\int_{x_i}^{x_{n-k+1}} u(z) \, \bar{K}^\dn(z,z+2kh) \, dz=0, \\ \\
\bar{K}^\dn_{i,i+2k}-\displaystyle\int_{x_i}^{x_{i+k}} v(z) \, \bar{K}^\up(z,2(i+k)h-z) \, dz=\tfrac{1}{2} v_{i+k}.
\end{cases}
\end{equation*}

To compute the first integral 
$$I^1_{k,i}=\displaystyle \int_{x_i}^{x_{n-k+1}} u(z) \, \bar{K}^\dn(z,z+2kh) \, dz$$
we use different quadrature formulae, according to the node $x_i$. More precisely, fixing $k$, for: 
\begin{itemize}
\item[$\bullet$] $i=n-k$, being involved only two nodal points, we use the trapezoidal rule and then take 
$$I^1_{k,i} = \frac{h}{2} \{u_{n-k} \bar{K}^\dn_{n-k,n+k} + u_{n-k} \bar{K}^\dn_{n-k+1,n+k+1}\}= \frac{h}{2} u_{n-k} \bar{K}^\dn_{n-k,n+k},$$
as the nodal point $(x_{n-k+1}, x_{n+k+1})$ is outside of the support of $\bar{K}^\dn(x,y)$; 
\item[$\bullet$] $i=n-k-\ell$, with $\ell$ odd and $\ell \leq 2n-k$, applying the composite Simpson's rule, we obtain
\begin{align*}
I^1_{k,i} &= \frac{h}{3} \left[u_{i} \bar{K}^\dn_{i,i+2k}+ 4 \sum_{j=1}^{\frac{\ell+1}{2}} u_{i+2j-1} \bar{K}^{dn}_{i+2j-1,i+2j-1+2k} \right. \\ & \left. +2 \sum_{j=1}^{\frac{\ell+1}{2}} u_{i+2j-1} \bar{K}^{dn}_{i+2j,i+2j+2k}\right], 
\end{align*}
as  $\bar{K}^\dn_{n-k+1,n+k+1}=0$.
\item[$\bullet$] $i=n-k-\ell$, with $\ell$ even and $\ell \leq 2n-k$, noting that
\begin{align*}
I^1_{k,i}= \displaystyle \left \{\int_{x_{i}}^{x_{i+3}} +\displaystyle\int_{x_{i+3}}^{x_{n-k+1}} \right\} u(z) \, \bar{K}^{\dn}(z,z+2kh) \, dz,
\end{align*}
we apply the $3/8$ Simpson's rule for the first integral and the composite Simpson's quadrature formula for the second one. Hence, we have
\begin{align*}
I^1_{k,i} & = \frac{3}{8}h \left[ u_{i}\bar{K}^{\dn}_{i,i+2k}+ 3 u_{i+1}\bar{K}^{\dn}_{i+1,i+1+2k}+3 u_{i+2}\bar{K}^{\dn}_{i+2,i+2+2k} + u_{i+3}\bar{K}^{\dn}_{i+3,i+3+2k}  \right] \\ & +
\frac{h}{3} \left[u_{i+3}\bar{K}^{\dn}_{i+3,i+3+2k}+  4 \sum_{j=1}^{\frac{\ell}{2}} u_{i+2+2j} \bar{K}^{\dn}_{i+2+2j,i+2+2j+2k} \right. \\ & \hspace*{4cm}\left. +2 \sum_{j=1}^{\frac{\ell}{2}-1} u_{i+3+2j} \bar{K}^{\dn}_{i+3+2j,i+3+2j+2k}\right]
\end{align*}
as the nodal point $(x_{n-k+1},x_{n+k+1})$ is outside the support of $\bar{K}^{\dn}(x,y)$.
\end{itemize}

The computation of the second integral
$$I^2_{k,i}=\displaystyle\int_{x_i}^{x_{i+k}} v(z) \, \bar{K}^\up(z,2(i+k)h-z) \, dz, $$
is also based on the use of quadrature formulae, essentially dependent on the line $y=x+2kh$. More precisely, for:
\begin{itemize}
\item[$\bullet$] $k=1$, as only two nodal points are involved, we apply the trapezoidal rule, obtaining
$$I^2_{k,i} = \frac{h}{2} \{v_{i} \bar{K}^\up_{i,i+2} + v_{i+1} \bar{K}^\up_{i+1,i+1} \};$$
\item[$\bullet$] $k=2,4,6,...,\,2n$, we use the composite Simpson quadrature formula. Proceeding in this way we obtain for $i=n-k,...,\,-n$
\begin{align*}
I^2_{k,i} & = \frac{h}{3} \left[v_{i} \bar{K}^\up_{i,i+2k}+ 4 \sum_{j=1}^{\frac{\ell}{2}} v_{i+2j-1} \bar{K}^{\up}_{i+2j-1,i-2j+1+2k} \right. \\ & \left. +2 \sum_{j=1}^{\frac{\ell}{2}-1} v_{i+2j} \bar{K}^{\up}_{i+2j,i-2j+2k}+ v_{i+k} \bar{K}^\up_{i+k,i+k} \right] \\
& = \frac{h}{3} v_{i} \bar{K}^\up_{i,i+2k}+ w_{k,i},
\end{align*}
where $w_{k,i}$ is the sum of the $\bar{K}^{\up}$ values in the nodal points belonging to the bisector and the previous parallels. In fact, the $\bar{K}^{\up}$ values of the first sum belong to the lines $y=x+[2k-2(2j-1)]h$, those of the second one belong to the lines   $y=x+[2k-4j]h$ and the last term to $y=x$.
\item[$\bullet$] $k=3,5,7,...,\,2n-1$, we write
\begin{align*}
I^2_{k,i}= \displaystyle \left \{\int_{x_{i}}^{x_{i+3}} +\displaystyle\int_{x_{i+3}}^{x_{i+k}} \right\} v(z) \, \bar{K}^\up(z,2(i+k)h-z) \, dz 
\end{align*} 
and then we use the $3/8$ Simpson rule for the first integral and again the composite Simpson quadrature formula for the second one: 
\begin{align*}
I^2_{k,i} &= \frac{3}{8}h \left[ v_{i} \bar{K}^{\up}_{i,i+2k}+ 3 v_{i+1}\bar{K}^{\up}_{i+1,i+2k-1}+3 v_{i+2} \bar{K}^{\up}_{i+2,i+2k-2} + v_{i+3}\bar{K}^{\up}_{i+3,i+2k-3}  \right] \\ & +
\frac{h}{3} \left[v_{i+3}\bar{K}^{\up}_{i+3,i+2k-3}+  4 \sum_{j=1}^{\frac{\ell}{2}} v_{i+2+2j} \bar{K}^{\up}_{i+2+2j,i-2-2j+2k} \right. \\ & \left. +2 \sum_{j=1}^{\frac{\ell}{2}-1} u_{i+3+2j} \bar{K}^{\up}_{i+3+2j,i-3-2j+2k}+ v_{i+k}\bar{K}^{\up}_{i+k,i+k}\right] \\ &= \frac{3}{8}h v_{i} \bar{K}^{\up}_{i,i+2k}+ w_{k,i},
\end{align*}
where $w_{k,i}$ is known, being a linear combination of $\bar{K}^{\up}$ values already computed.

Once the integrals have been approximated as described above, we obtain the $2n$ following structured  systems of order $2(2n+1-k)$, $k=1,\, \dots,\,2n$ 
\begin{equation}\label{system}
\begin{cases}
\mathbf{\bar{k}}^{\up}_k + \mathbf{U}_{k,1} \mathbf{\bar{k}}^{\dn}_k=\mathbf{0} \\
\mathbf{U}_{k,2} \mathbf{\bar{k}}^{\up}_k+ \mathbf{\bar{k}}^{\dn}_k= \mathbf{v}_{k}-\mathbf{w}_{k}
\end{cases}
\end{equation}
that  allow us to compute the functions $\bar{K}^{\up}$ and $\bar{K}^{\dn}$ in the $2n+1-k$ nodal points of $\mathcal{D}_k$ as
\begin{align*}
\mathbf{\bar{k}}^{\up}_k &=(\bar{K}^{\up}_{n-k,n+k},\,\bar{K}^{\up}_{n-k-1,n+k-1},\,\dots,\,\bar{K}^{\up}_{-n+1,-n+2k+1},\,\bar{K}^{\up}_{-n,-n+2k})^T \\
\mathbf{\bar{k}}^{\dn}_k &=(\bar{K}^{\dn}_{n-k,n+k},\,\bar{K}^{\dn}_{n-k-1,n+k-1},\,\dots,\, \bar{K}^{up}_{-n+1-n+2k+1},\,\bar{K}^{\dn}_{-n,-n+2k})^T. 
\end{align*}
Notice that   $\mathbf{U}_{k,1},\,\mathbf{U}_{k,2}$ are the following structured matrices:
\begin{align*}
\mathbf{U}_{k,2}&=-c_k \, h  \, diag(v_{n-k},\,v_{n-k-1},\,\dots,\,v_{-n+1},\,v_{-n}) 
\end{align*}
with $c_1=1/2$, $c_2=c_4=...=c_{2n}=1/3$ and $c_3=c_5=...=c_{2n-1}=3/8$ and
\begin{align*}
\mathbf{U}_{k,1}&= h \left( \begin{matrix}
1/2 & & & & &  \\ 
4/3 & 1/3 & & & & \\ 
9/8 & 9/8 & 3/8 & & & \\ 
4/3 & 2/3 & 4/3 & 1/3 & & \\ 
4/3 & 17/24 & 9/8 & 9/8 & 3/8  \\ 
\vdots & & & \vdots & &  \ddots \\
\vdots & & & \vdots & & & \ddots \\
\vdots & & & \vdots & & & & \ddots \\
4/3 & 2/3 & \hdots & \hdots & & 17/24 & 9/8 & 9/8 & 3/8  \\ 
4/3 & 2/3 & 4/3 & \hdots & \hdots  &  & 4/3 & 2/3 & 4/3 & 1/3 \\
\end{matrix} \right) \times
\\ & \hspace*{1cm} diag(u_{n-k},\,u_{n-k-1},\,\dots,\,u_{-n+1},\,u_{-n}).
\end{align*}
The most obvious computational strategy is to reduce \eqref{system} to a sequence of $n-k$ systems of order two. However, our numerical experiments indicate that the numerical stability increases by using a suitable iterative method. 

It requires solving iteratively the system 
\begin{equation}\label{system1}
(\mathbf{I}-\mathbf{U}_{k,1} \mathbf{U}_{k,2}) \, \mathbf{\bar{K}}^{\up}_k= \mathbf{U}_{k,1} \, \mathbf{w}_{k},
\end{equation}
and then computing    
\begin{equation}\label{system2}
\mathbf{\bar{K}}^{\dn}_k= \mathbf{V}_{k}- \mathbf{U}_{k,2} \,\mathbf{\bar{K}}^{\up}_k.
\end{equation}
The matrix of system \eqref{system1}, for $h$ small enough, is diagonally dominant as each nonzero element of $\mathbf{U}_{k,1} \mathbf{U}_{k,2}$ contains a factor $h^2$, so that the Gauss-Seidel method is a suitable choice of iteration method, assuming as an initial vector  the values of $\mathbf{\bar{k}}^{\up}_k$ in the previous parallel, that is taking in the $(k+1)$th parallel to the bisector
\begin{equation}
(\mathbf{\bar{k}}^{\up}_{k+1})^{(0)} = \mathbf{\bar{k}}^{\up}_{k} \qquad k=0,1,\,\dots,\,2n-1.
\end{equation}
As $\mathbf{I}-\mathbf{U}_{k,1} \mathbf{U}_{k,2}$ is lower triangular, it is of course possible to solve it by a descending technique.
\end{itemize}   

\begin{remark}
Once we have solved system \eqref{nucleiKbarrati} we can immediately deduce the solution of system \eqref{nucleiK} taking into account Remark \ref{remark1}. In any case, we note that,  as the computational area of system \eqref{nucleiKbarrati} is the same as that of \eqref{nucleiK}, 
the algorithm to solve \eqref{nucleiK} is analogous to that adopted for system \eqref{nucleiKbarrati}.

The same comparative considerations hold true for the computation of ($\bar{M}^{\up}$, $\bar{M}^{\dn}$) and (${M}^{\up}$, ${M}^{\dn}$)  in the nodal points of their computational area. Moreover, although the computational area for $(\bar{M}^{\up}$, $\bar{M}^{\dn})$ is not the same as that for $(\bar{K}^{\up}$, $\bar{K}^{\dn})$,  the technique for their computation is essentially the same.

Noting that (Figures \ref{square}, \ref{square2}) the two computational areas  are symmetric with respect to each other, we first have to compute ($\bar{M}^{\up}$, $\bar{M}^{\dn}$) in the bisector and then on the parallel lines $y=x-2kh$, $k=1,2,...,2n$. Furthermore, to compute $\bar{M}^{\dn}$   in the bisector we can adopt the same algorithm for $\bar{K}^{\up}$ as relations \eqref{propr1} and \eqref{propr4} indicate. A comparison between the systems \eqref{nucleiKbarrati} and \eqref{nucleiMbarrati} also suggests to approximate the first integral in \eqref{nucleiMbarrati} by a simple adaptation of the method developed for the second one in \eqref{nucleiKbarrati}, as well as the second integral of \eqref{nucleiMbarrati} by adapting the method for the first integral of \eqref{nucleiKbarrati}.
\end{remark}

\subsection{Marchenko kernel computation}
To compute $\Omega_\ell$ and $\Omega_r$, that is to solve the integral equations \eqref{Mar_K} and \eqref{Mar_M}, we first note that \eqref{assu0} implies the boundedness of their supports. In fact, as proved in \cite[Lemma 5.1]{Fermo2014-PUB}, \eqref{assu0} implies that
$$supp(\Omega_\ell)=[0,\,2L], \quad \textit{and} \quad supp(\Omega_r)=[-2L,\,0]. $$
For the approximation of $\Omega_\ell$ we collocate \eqref{Mar_K} in the nodal points
$$\{(x_{n-2i}, x_n), \quad x_{n-2i}=(n-2i)h, \quad i=0,1,...,n \},$$
by obtaining
\begin{equation}
\Omega_\ell(x_{2(n-i)})+\int_{x_{n-2i}}^{x_{n}} K^{\dn}(x_{n-2i},z) \Omega_\ell(z+x_n) dz=-\bar K^{\dn}(x_{(n-2i)},x_n).
\end{equation}
Now, to compute the above integral  we use different quadrature formula by adopting a steplenght $\delta=2h$ that is twice the one considered in the numerical solution of system \eqref{nucleiK} to avoid the interpolation among the values of the auxiliary functions computed before. 
More precisely, for
\begin{itemize}
\item[$\bullet$] $i=0$ we immediately obtain that
$$\Omega_{\ell,2n}=-\bar{K}^{\dn}_{n,n}=-\frac{1}{2} v_n $$
in virtue of \eqref{propr1};
\item[$\bullet$] $i=1$ we use the trapezoidal rule by getting
$$\left(1+\frac{\delta}{2} K^{\dn}_{n-2,n-2}\right)\Omega_{\ell,2(n-1)}=-\bar{K}^{\dn}_{n-2,n}-K^{\dn}_{n-2,n} \Omega_{\ell,2n};$$
\item[$\bullet$] $i=2,4,6,...$ we use the Simpson quadrature formula
\begin{align*}
\left(1+\frac{\delta}{3} K^{\dn}_{n-2i,n-2i}\right)\Omega_{\ell,2(n-i)}=-\bar{K}^{\dn}_{n-2i,n}&-\frac{\delta}{3} \left(4 \sum_{j=1}^{\frac{i+1}{2}} K^{\dn}_{n-2i,n-2(i-j)} \Omega_{\ell,2(n-2(i-j))} \right.\\ & \hspace{-4cm} \left.+2\sum_{j=1}^{\frac{i+1}{2}-1} K^{\dn}_{n-2i,n-2(i-j-1)} \Omega_{\ell,2(n-(i-j-1))} + K^{\dn}_{n-2i,n} \Omega_{\ell,2n} \right); 
\end{align*}

\item[$\bullet$] $i=3,5,7,...$  as we can write
\begin{align*}
&\int_{x_{n-2i}}^{x_{n}} K^{\dn}(x_{n-2i},z) \Omega_\ell(z+x_n) dz \\ &= \left\{ \int_{x_{n-2i}}^{x_{n-2(i-3)}}+ \int_{x_{n-2(i-3)}}^{x_n} \right\} K^{\dn}(x_{n-2i},z)  \Omega_\ell(z+x_n) dz
\end{align*}
we approximate the first integral by using the $3/8$ Simpson rule and the last integral by adopting the composite Simpson quadrature formula. Hence we get
\begin{align*}
 \left(1+\frac{3 \delta}{8} K^{\dn}_{n-2i,n-2i}\right) \Omega_{\ell,2(n-i)}  &=-\bar{K}^{\dn}_{n-2i,n} - \frac{3 \delta}{8}  \left( 3 K^{\dn}_{n-2i,n-2(i-1)} \Omega_{\ell,2(n-(i-1))} \right. \\ & \hspace{-3cm}\left.+ 3 K^{\dn}_{n-2i,n-2(i-2)} \Omega_{\ell,2(n-(i-2))}+ K^{\dn}_{n-2i,n-2(i-3)} \Omega_{\ell,2(n-(i-3))}  \right) \\ & \hspace{-3cm} - \frac{\delta}{3} \left( K^{\dn}_{n-2i,n-2(i-3)} \Omega_{\ell,2(n-(i-3))}+ 4 \sum_{j=1}^{\frac{i+1}{2}} K^{\dn}_{n-2i,n-2(2i-3-j)} \Omega_{\ell,2(n-(2i-3-j))} \right. \\ & \hspace{-3cm} \left. +  2 \sum_{j=1}^{\frac{i+1}{2}-1} K^{\dn}_{n-2i,n-2(2i-4-j)} \Omega_{\ell,2(n-(2i-4-j))} +K^{\dn}_{n-2i,n} \Omega_{\ell,2n} \right).
\end{align*}
\end{itemize}  
An analogous procedure can be applied to  approximate $\Omega_r$ in $[-2L,\,0]$. More precisely, 
we collocate \eqref{Mar_M} in the nodal points
$$\{(x_{2i-n}, x_{-n}), \quad x_{2i-n}=(2i-n)h, \quad i=0,1,...,n \},$$
by obtaining
\begin{equation}
\Omega_r(x_{2(i-n)})+\int_{x_{-n}}^{x_{2i-n}} M^{\up}(x_{2i-n},z) \Omega_\ell(z+x_{-n}) dz=-\bar M^{\up}(x_{(2i-n)},x_{-n}).
\end{equation}
Hence, by adopting the technique illustrated above, 
\begin{itemize}
\item[$\bullet$] for $i=0$ we immediately obtain
$$\Omega_{r,-2n}=-\bar{M}^{\up}_{-n,-n}=-\frac{1}{2} u_{-n} $$
in virtue of \eqref{propr1};
\item[$\bullet$] for $i=1$ we obtain
$$\left(1+\frac{\delta}{2} M^{\up}_{2-n,2-n}\right)\Omega_{r,2(1-n)}=-\bar{M}^{\up}_{2-n,-n}-M^{\up}_{2-n,-n} \Omega_{r,-2n};$$
\item[$\bullet$] for $i=2,4,6,...$ we obtain 
\begin{align*}
\left(1+\frac{\delta}{3} M^{\up}_{2i-n,2i-n}\right)\Omega_{r,2(i-n)}=-\bar{M}^{\up}_{2i-n,-n}&-\frac{\delta}{3} \left(4 \sum_{j=1}^{\frac{i+1}{2}} M^{\up}_{2i-n,2(i-j)-n} \Omega_{r,2((i-j)-n)} \right.\\ & \hspace{-4cm} \left.+2\sum_{j=1}^{\frac{i+1}{2}-1} M^{\up}_{2i-n,2(i-j-1)-n} \Omega_{r,2((i-j-1)-n)} + M^{\up}_{2i-n,-n} \Omega_{r,-2n} \right);
\end{align*}
\item[$\bullet$] for $i=3,5,7,...$  as we can write
\begin{align*}
&\int_{x_{-n}}^{x_{2i-n}} M^{\up}(x_{2i-n},z) \Omega_r(z+x_{-n}) dz \\ &= \left\{ \int_{x_{-n}}^{x_{2(i-3)-n}}+ \int_{x_{2(i-3)-n}}^{x_{2i-n}} \right\} M^{\up}(x_{2i-n},z)  \Omega_r(z+x_{-n}) dz,
\end{align*}
we approximate the first integral by using the composite  Simpson rule and the second one by adopting the $3/8$ Simpson's quadrature formula. Hence we get
\begin{align*}
 \left(1+\frac{3 \delta}{8} M^{\up}_{2i-n,2i-n}\right) \Omega_{r,2(i-n)}  &=-\bar{M}^{\up}_{2i-n,-n} - \frac{3 \delta}{8}  \left( 3 M^{\up}_{2i-n,2(i-1)-n} \Omega_{r,2((i-1)-n)} \right. \\ & \hspace{-3cm}\left.+ 3 M^{\up}_{2i-n,2(i-2)-n} \Omega_{r,2((i-2)-n)}+ M^{\up}_{2i-n,2(i-3)-n} \Omega_{r,2((i-3)-n)}  \right) \\ & \hspace{-3cm} - \frac{\delta}{3} \left( M^{\up}_{2i-n,2(i-3)-n} \Omega_{r,2((i-3)-n)}+ 4 \sum_{j=1}^{\frac{i+1}{2}} M^{\up}_{2i-3,2(2i-3-j)-n} \Omega_{r,2((2i-3-j)-n)} \right. \\ & \hspace{-3cm} \left. +  2 \sum_{j=1}^{\frac{i+1}{2}-1} M^{\up}_{n-2i,2(2i-4-j)-n} \Omega_{r,2((2i-4-j)-n)} +M^{\up}_{2i-n,-n} \Omega_{r,-2n} \right).
\end{align*}
\end{itemize}

\subsection{Computation of the scattering matrix and inverse Fourier transforms of reflection coefficients} 
In this section we illustrate our method to approximate the scattering matrix and then to compute the transmission coefficients $T$ defined in \eqref{T}, the reflection coefficients $R$ and $L$ introduced in \eqref{R}-\eqref{L} and their Fourier transforms $\rho$ and $\ell$ given in \eqref{rho}-\eqref{elle}, under the assumption that $u_0 \in C(\mathbb{R})$. 
\subsection*{Approximation of the transmission coefficient T}
It is based on the two equivalent definitions of the transmission coefficient 
\begin{equation}\label{T1}
T(\lambda)= \dfrac{1}{a_{\ell 4}(\lambda)}, \qquad T(\lambda)= \dfrac{1}{a_{r 1}(\lambda)}
\end{equation}
that is on the computation of the coefficients of the transition matrices
\begin{align}
a_{\ell4}(\lambda)&= 1+\displaystyle\int_{\R^+} e^{i \lambda z} {\Phi}^{\up}(z) dz=1+2 \pi \iftran{\Phi^{\up}(\lambda) H(\lambda)}, \label{al4}\\
a_{r1}(\lambda)&= 1+\displaystyle\int_{\R^+} e^{i \lambda z} \Psi^{\dn}(z) dz=1+2 \pi \iftran{\Psi^{\dn}(\lambda) H(\lambda)} \label{ar1},
\end{align}
where $H$ denotes the Heaviside function and $\iftran{g}$ stands for the inverse Fourier transform of $g$.

Let us only illustrated the algorithm for the computation of the coefficient $a_{\ell 4}$ as the computation of $a_{r1}$ is analogous.

At first we note that,  taking into account \eqref{assu0} and the support of $K^{\up}$, the kernel $\Phi^{\up}$ of \eqref{eq_rho1} can be written as follows:
\begin{equation}
\Phi^{\up}(z)=
\begin{cases}
\displaystyle \int_{-L}^{L-\frac{z}{2}} v_0(y) K^{\up}(y,y+z) dz, & for \quad  0 \leq z \leq 4L\\
0, & for \quad z>4L.
\end{cases}
\end{equation}

Then, writing, for simplicity,
$\Phi^{\up}_j=\Phi^{\up}(z_j)=\Phi^{\up}(2 h j), \, j=0,1,2,...,2n-1$
we have successively to compute $\Phi^{\up}_0, \, \Phi^{\up}_1,\, ...,\, \Phi^{\up}_{2n-1}$ by obtaining
$$\Phi^{\up}_j=\int_{-n h}^{(n-j)h} v_0(y) K^{\up}(y, y+2hj) dy, \quad j=0,1,...,2n-1.$$
We remark that its computation requires only the values of $K^{\up}(y,y+2hj)$ which we have already computed since they are the values of $K^{up}$ on the $j$th parallel to the bisector $y=x$.  For this reason $\Phi^{\up}_j$ can be computed by simply adopting the computational strategy that we developed for computing $K^{\up}$. At this point the approximation of $T(\lambda)$, easily follows by using \eqref{T1}.  
\subsection*{Approximation of the reflection coefficients  $R$ and $L$}
In the matter of the computation of the reflection coefficients, taking into account \eqref{R} and \eqref{L}, we can write
\begin{equation}\label{R1}
R(\lambda)= - T(\lambda) \, a_{\ell 3}(\lambda), \qquad L(\lambda)=  T(\lambda) \, a_{\ell 2}(\lambda) 
\end{equation}
where $T(\lambda)= \dfrac{1}{a_{\ell 4}(\lambda)}$,
\begin{align*}
a_{\ell3}(\lambda)&=\displaystyle \int_{\R} e^{-i \lambda y}  \left[ \frac{1}{2} v_0\left(\frac{y}{2}\right)+\bar{\Phi}^{\up}(y) \right] \, dy = \ftran{\frac{1}{2} v_0\left(\frac{y}{2}\right)+\bar{\Phi}^{\up}(y)},
\end{align*}
and
\begin{align*}
a_{\ell2}(\lambda)&=-\displaystyle\int_{\R} e^{i \lambda y} \left( \frac{1}{2} u_0 \left(\frac{y}{2}\right)+\Phi^{\dn}(y) \right)  dy = - 2 \pi \iftran{\frac{1}{2} u_0\left(\frac{y}{2}\right)+\Phi^{\dn}(y)}.
\end{align*}
Other equivalent expressions can be deducted by using the definitions of $R$, $L$ and $T$ in terms of the coefficients of the transmission matrix from the right.

To approximate $a_{\ell 3}$, taking into account \eqref{assu0} and the support of $\bar{K}^{\up}$, first we note that
\begin{equation}
\bar{\Phi}^{\up}(z)=
\begin{cases}
\displaystyle \int_{-L}^{\frac{z}{2}} v_0(y) \bar{K}^{\up}(y,z-y) dy, & for \quad  |z| \leq 2L \\
0, & for \quad |z|>2L.
\end{cases}
\end{equation}

Moreover, adopting the notation used before and noting that $\bar{\Phi}^{\up}_{-n}=\bar{\Phi}^{\up}(-n)=\bar{\Phi}^{\up}(-2 n h)=0,$
we can write $$\bar{\Phi}^{\up}_i=
\int_{-n h}^{i h} v_0(y) \bar{K}^{\up}(y, 2 h i-y) dy, \quad i=-n+1,...,\,0,\,...,\,n. $$ 
Hence $\bar{\Phi}^{\up}_i$, as well as $\Phi^{\up}_j$, can be computed by simply adapting the computational strategy developed for $K^{\up}$. The approximation of $R$ and $L$ immediately follow by using \eqref{R1}.

\subsection{Computation of the bound states and the norming constants}\label{sect:Prony}
For the sake of completeness, we now give a brief description of the matrix-pencil method that we have recently developed for the identification of the bound states and the norming constants 
\cite{Fermo2014-AMC,Fermo2014-ETNA}.
Setting $z_j=e^{i \lambda_j}$, the spectral function sum $S_\ell(\alpha)$ introduced in \eqref{S_ell} can be represented as the monomial-power sum
\begin{equation*}
S_\ell(\alpha)=\sum_{j=1}^{n} \sum_{s=0}^{m_j-1} c_{js} \alpha^s z_j^\alpha, \quad 0^0 \equiv 1.
\end{equation*}

Letting $M=m_1+...+m_n$, the method allows one to compute the parameters $\{n,m_j,z_j\}$ and the coefficients $\{c_{js}\}$,  given $S_\ell(\alpha)$ in $2N$ integer values ($N>M$) 
$$\alpha=\alpha_0,\alpha_0+1,\dots, \alpha_0+2N-1, \quad \text{with} \quad \alpha_0 \in \mathbb{N}^+=\{0,1,2,... \},$$ 
under the assumption that a reasonable overestimate of $M$ is known.

The basic idea of the method is the interpretation of $S_\ell(\alpha)$  as the general solution of a homogeneous linear difference equation of order $M$
$$ \sum_{k=0}^M p_k S_{k+\alpha_0}=0$$
whose characteristic polynomial (Prony's polynomial)
$$P(z)=\prod_{j=1}^{n} (z-z_j)^{m_j} =\sum_{k=0}^M p_k z^k, \quad p_M \equiv 1$$
is uniquely characterized by the $z_j$ values we are looking for. The identification of the zeros $\{ z_j\}$ allows one to compute the  coefficients $c_{js}$ by solving in the least squares sense a linear system.

For the computation of $\{z_j\}$ and then of the bound states $\lambda_j$, the given data are arranged in the two Hankel matrices of order $N$
$$({\bf{S}}_{\ell}^{0})_{ij}= S_\ell(i+j-2), \quad ({\bf{S}}_{\ell}^{1})_{ij}=S_\ell(i+j-1), \quad \quad i,j=1,2,\dots,N. $$

To these matrices we then  associate the $M \times M$ matrix-pencil
$$ {\bf{S}}_{MM}(z)=({{\bf{S}}_{NM}^{0}})^* ({\bf{S}}_{NM}^{1} - z {\bf{S}}_{NM}^{0})$$
where the asterisk denotes the conjugate transpose.
As proved in \cite{Fermo2014-ETNA}, the zeros $z_j$ of the Prony polynomial, with their multiplicities, are exactly the generalized eigenvalues of the matrix-pencil ${\bf{S}}_{MM}(z)$. The simultaneous factorization of the matrices  ${{\bf{S}}_{NM}^{0}}$ and ${{\bf{S}}_{NM}^{1}}$ by the Generalized Singular Value Decomposition allows us to compute the zeros $z_j$ and then the bound states $\lambda_j$, as $\lambda_j=-i \log{z_j}.$ 

Analogous results can be obtained by a proper factorization of the augmented Hankel matrix ${{\bf{S}}_{\ell}}=[{{\bf{S}}_{\ell, 1}^{0}}, { {\bf{S}}_{\ell}^{1}}]$, where ${{\bf{S}}_{\ell, 1}^{0}}$ is the first column of ${{\bf{S}}_{NM}^{0}}$ and ${{\bf{S}}_{\ell}^{0}}$ is obtained by ${{\bf{S}}_{\ell}^{1}}$ by simply deleting its last column. As shown in \cite{Fermo2014-AMC}, the QR factorization of ${{\bf{S}}_{\ell}}$ is as effective as its SVD factorization considered in \cite{Fermo2014-ETNA}, though its computational complexity is generally smaller.

The vector of coefficients  $${\bf{c}}=[c_{1\,0},...,c_{1\,n_1-1},...,c_{L\,0},...,c_{L\,n_{M}-1}]^T$$ is then  computed by solving (in the least square sense) the overdetermined linear system 
\begin{equation*}
\mathbf{K}_{NM}^{0} \mathbf{c}=\mathbf{S}_\ell^{0}
\end{equation*}
where ${\bf{S}_\ell}^{0}=[S_\ell(0),\, S_\ell(1),\, \dots,\, S_\ell(N-1)]^T$
and  $\mathbf{K}_{NM}^{0}$ is the Casorati matrix associated to the monomial powers $\{k^s z_j^k\}$ for $k=1,...,N-1$.

If $m_j \equiv 1$, the Casorati matrix ${\bf K}^{0}_{Nn}$ reduces to the Vandermonde matrix $(V)_{ij}=z_j^{i+1}$ of order $N \times n$ associated to the  zeros $ z_1,\, \dots,\, z_n$.
The solution of the Casorati system allows us to immediately compute the norming constants as $(\Gamma_\ell)_{js}=s! c_{js}.$

The coefficients $\{ (\Gamma_r)_{js} \}$ are then obtained by solving, in the least square sense, a linear system whose vector of known data is given by $\Omega_r(\alpha)$ evaluated in a set of $N$ points, with a sufficiently large $N>M$.
\section{Examples}
\label{sect:examples}
Let us now present two examples. The first one is a reflectionless case while the second one has  reflection coefficients different from zero. Each of them will be used in the next section to give a numerical evidence of the effectiveness of our method.
\section*{Example 1 (One soliton potential)} 
Considering the initial potential for the NLS in the focusing case we take  
\begin{equation}\label{onesoliton}
u_0(x)=2 \im \eta e^{\im (2\xi x+\phi)} 
\sech{(x_0-2\eta x)}
\end{equation}
where $\xi,\phi, x_0 \in \mathbb{R}$ and $0 \neq \eta \in \mathbb{R}$. As proved in \cite{HasTap1}, the corresponding initial value problem \eqref{NLS} can be solved exactly, as already considered in several papers and in particular in \cite{ArVanSe2009} and  \cite{ArRoSe2011}.
Let us note that $2 \eta>0$ represents the amplitude of the initial potential and $\mu_0=x_0/2\eta$ is the initial peak position. 

In this example the norming constants from the left and from the right are \cite{ArRoSe2011}:
\begin{equation}
\Gamma_\ell=2 \im \eta e^{x_0-\im \phi} \quad\text{and} \quad  \Gamma_r=-2 i \eta e^{-x_0+\im \phi}.
\end{equation}

Moreover, setting   $a=\eta+\im \xi$, as it is immediate to check, the exact solution of the Volterra system \eqref{nucleiK} for $y \geq x$ is 
\begin{equation*}
\left( \begin{matrix}
K^{\up}(x,y) \\ \\  K^{\dn}(x,y)
\end{matrix} \right)= - \dfrac{1}{1+e^{2(x_0-2 \eta x)}} \left( \begin{matrix}
-\Gamma_\ell^* \, e^{-a^*(x+y)} \\ \dfrac{|\Gamma_\ell|^2}{2 \eta} e^{-a^*(x+y)-2ax}
\end{matrix} \right),
\end{equation*}
while the exact solution of system \eqref{nucleiKbarrati} can be obtained by resorting to relation \eqref{symmpropr1}.
Furthermore, the closed form solution of the Volterra system \eqref{nucleiM} is
\begin{equation*}
\left( \begin{matrix}
M^{\up}(x,y) \\ \\  M^{\dn}(x,y)
\end{matrix} \right)=  - \dfrac{1}{1+e^{-2(x_0-2 \eta x)}} \left( \begin{matrix}
\dfrac{|\Gamma_r|^2}{2 \eta} e^{a^*(x+y)+2ax}  \\  -\Gamma_r^* \, e^{a^*(x+y)}
\end{matrix} \right),
\end{equation*}
while the solution of system \eqref{nucleiMbarrati} can be deducted by using relation  \eqref{symmpropr1}. 
As it represents a reflectionless case, 
$$\rho(\alpha)=\ell(\alpha)=0, \quad \alpha \in \mathbb{R},$$
so that the exact initial Marchenko kernels are \cite{ArRoSe2011}
\begin{align*}
\Omega_\ell(x)&=\Gamma_\ell e^{-ax}, \\
\Omega_r(x)&=\Gamma_r e^{ax}.
\end{align*}
Finally, the scattering matrix is
\begin{equation*}\label{S}
{\bf S}(\lambda)=\begin{pmatrix}T(\lambda)&L(\lambda)\\ R(\lambda)& T(\lambda)
\end{pmatrix}=\begin{pmatrix}\dfrac{\lambda+ia^*}{\lambda-ia^*}& 0\\ 0 & \dfrac{\lambda+ia^*}{\lambda-ia^*}
\end{pmatrix}, \qquad  \lambda \in \mathbb{C}^+ .
\end{equation*}

\section*{Example 2 (Gaussian potential)}
As a second example of the initial potential for the NLS, we take
\begin{equation}\label{gaussiana}
u_0(x)=q_0 e^{\im \mu x} e^{-\frac{x^2}{\sigma}},
\end{equation}
where $q_0>0, \sigma>0$ and $\mu \in \mathbb{R}$. 

As in \cite{Osborne1993,Wahls2013} we investigate the defocusing case in which the scattering coefficients $T(\lambda)$, $R(\lambda)$ and $L(\lambda)$ are all continuous functions and there are no bound states. Hence, in this case the following relations hold true 
\begin{equation}\label{nobound}
\Omega_\ell(\alpha) \equiv \rho(\alpha) \quad \Omega_r(\alpha) \equiv \ell(\alpha).
\end{equation}
Moreover, we also consider the focusing case. In such a case, whenever $$q_0 \sqrt{\pi \sigma} < \frac{\pi}{2},$$ there are no discrete eigenvalues. On the contrary we have $n$  discrete eigenvalues, all of them simple and having real part $-\frac{\mu}{2}$, if \cite{KS} 
\begin{equation}\label{bound_gaussian}
\left(n-\frac{1}{2}\right) \pi<q_0 \sqrt{\pi \sigma}<\left(n+\frac{1}{2}\right)\pi.
\end{equation}
As a result the spectral sums from the left and from the right \eqref{S_ell} and \eqref{S_r} reduce to 
\begin{align}
S_\ell(\alpha)&=\sum_{j=1}^n (\Gamma_\ell)_j e^{i \lambda_j \alpha} \quad \alpha>0\\
S_r(\alpha)&=\sum_{j=1}^n (\Gamma_r)_j e^{i \lambda^*_j \alpha}, \quad \alpha<0.
\end{align}
We also remark that the reflection coefficients $R(\lambda)$ and $L(\lambda)$ and the transmission coefficient $T(\lambda)$ are discontinuous at $\lambda=-\frac{\mu}{2}$ if \cite{KS} $$q_0 \sqrt{\pi \sigma}=\left(n-\frac{1}{2}\right) \pi$$ for some positive integer $n$.


\section{Numerical results and conclusions}
\label{sect:tests}

\section*{Test 1 (One soliton potential)}
Let us consider as in \cite{ArRoSe2011} the initial potential \eqref{onesoliton} with $\xi=1/10$, $x_0=\phi=0$ and $\eta=2$. In order to compute the non-zero scattering parameters that in this case are the norming constants, the bound states and the transmission coefficient, at first we solve the Volterra's system \eqref{nucleiK} and \eqref{nucleiM} with
$L=8$ and $n=3000$ by obtaining the following relative errors
\begin{align*}
\frac{\|K^{\up}-\tilde{K}^{\up}\|}{\|K^{\up}\|} = 1.80e-06, \quad \frac{\|K^{\dn}-\tilde{K}^{\dn}\|}{\|K^{\dn}\|} = 1.04e-07, \\
\frac{\|M^{\up}-\tilde{M}^{\up}\|}{\|M^{\up}\|} =  1.07e-07, \quad 
\frac{\|M^{\dn}-\tilde{M}^{\dn}\|}{\|M^{\dn}\|} =  1.80e-06,
\end{align*}
where here and in the sequel the $\sim$ sign denotes the approximation of the exact function previously given and $\| \cdot\|$ denotes the maximum norm of the involved function in their computational areas.
Identical relative errors are of course obtained for the remainding auxiliary functions, as a result of the symmetry properties  \eqref{symmpropr1} and \eqref{symmpropr2}.

Once these auxiliary functions are computed we numerically solve equations getting for the Marchenko kernels from the right and from the left with the following relative errors:
\begin{align*}
\dfrac{\displaystyle \max_{x \in [0,2L]}{|\tilde{\Omega}_\ell(x)-\Omega_\ell}(x)|}{\displaystyle \max_{x \in [0, 2L]}{|\Omega_\ell(x)|}}& \simeq \dfrac{\displaystyle \max_{x \in [-2L,0]}{|\tilde{\Omega}_r(x)-\Omega_r}(x)|}{\displaystyle \max_{x \in [-2L, 0]}{|\Omega_r(x)|}} \simeq  3.24e-07,
\end{align*}
where the symbol $\simeq$ means that the left term coincide with the right term up to the third decimal digit.

At this point, by using such kernels, we apply our matrix pencil method \cite{Fermo2014-AMC} by finding a single bound state term, a norming constant from the left and a norming constant from the right with the following relative errors: 
\begin{align*}
\frac{|\tilde{\lambda}-\lambda|}{|\lambda|} =4.11e-09, \quad \frac{|\tilde{\Gamma_\ell}-\Gamma_\ell|}{|\Gamma_\ell|} \simeq \frac{|\tilde{\Gamma_r}-\Gamma_r|}{|\Gamma_r|} \simeq 3.24e-07.
 \quad 
\end{align*}

In the matter of the relative errors of the scattering matrix, we obtain 

\begin{align*}
\dfrac{\displaystyle \max_{\lambda \in [-2L,2L]}{\|\tilde{\mathbf{S}}(\lambda)-\mathbf{S}(\lambda)\|}}{\displaystyle \max_{\lambda \in [-2L,2L]}{\|\mathbf{S}(\lambda)\|}}=  4.60e-07.
\end{align*}
Moreover to ascertain the effectiveness of our  numerical method we checked the numerical validity of the algebraic property \eqref{proprS2}. The results are at all satisfactory as  Figure \ref{graficonorma1} shows where the behavior of the error function
$$E_s(\lambda)=\left \| \dfrac{1}{2}(\mathbf{S}^\dagger(\lambda) \mathbf{J} \mathbf{S} (\lambda)+\mathbf{S}(\lambda) \mathbf{J} \mathbf{S}^\dagger(\lambda))-\mathbf{J} \right \| $$
 is reported for $\lambda \in [-2L,2L]$  in semilog scale.

\begin{figure}[h]
\includegraphics[scale=0.4]{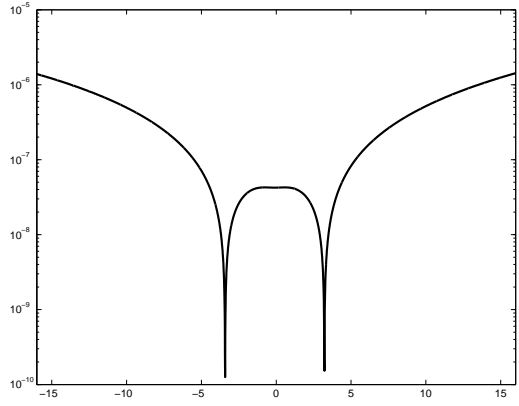}
\caption{$E_s(\lambda)$ in semi logarithmic scale \label{graficonorma1}}
\end{figure}

Concerning the trasmission coefficient, we can compute it by approximating at first the integral $\Phi^{\up}$ defined in \eqref{Phi2}, and then using \eqref{T1}. In Table \ref{table1} we give the following relative errors we obtain for such a coefficient over  segment of width $4L$ of three different lines 
$$E_r(T)=\frac{\displaystyle \max_{\lambda \in [a,b]}{|\tilde{T}(\lambda)-T(\lambda)|}}{\displaystyle \max_{\lambda \in [a, b]}{|T(\lambda)|}}.$$

\begin{table}[h]
\caption{$E_r(T)$ in the one soliton case \label{table1}}
\begin{tabular}{ll}
\hline\noalign{\smallskip}
 $[a,b]$ & $E_r(T)$  \\
 \noalign{\smallskip}\hline\noalign{\smallskip}
 $[-2L,2L]$ & $3.13e-07$ \\
 $[-2L+\mathbf{i},2L+\mathbf{i}]$ & $2.21e-07$ \\
 $[-2L+5\mathbf{i},2L+5\mathbf{i}]$ & $ 4.47e-07$ \\
\noalign{\smallskip}\hline
\end{tabular}
\end{table}

\section*{Test 2 (Gaussian potential)}
Let us consider first the initial potential \eqref{gaussiana} in the defocusing case with $q_0=1.9,\,\mu=1,\,\sigma=2$ as in \cite{Osborne1993,Wahls2013}. To this end, we compute the solution of  systems \eqref{nucleiKbarrati}-\eqref{nucleiM} considering as in the soliton case $L=8$ and $n=3000$, then we solve equations \eqref{Mar_K}-\eqref{Mar_M}, compute the scattering matrix and thus the Fourier transforms of the reflection coefficients. Our numerical method recognizes that, as 
theoretically expected, there are no bound states and relations \eqref{nobound} are numerically satisfied since we have the following errors:
\begin{equation*}
\max_{x \in [0,2L]}|\Omega_\ell(x)-\rho(x)| = 1.08e-10, \quad \max_{x \in [-2L,0]}|\Omega_r(x)-\ell(x)| =  1.44e-09.
\end{equation*}
As in the one soliton case, we checked if our numerical results satisfy the algebraic property \eqref{proprS2} for the scattering matrix, by considering in semi logarithmic scale the error function 
$$E_{GD}(\lambda)=\left \| \dfrac{1}{2}(\mathbf{S}^\dagger(\lambda) \mathbf{S} (\lambda)+\mathbf{S}(\lambda) \mathbf{S}^\dagger(\lambda))-\mathbf{I} \right \| $$
for $\lambda \in [-2L,2L]$. As shown in Figure \ref{graficonorma} its numerical validity is satisfactory as in the soliton case. 
 
\begin{figure}[h]
\includegraphics[scale=0.33]{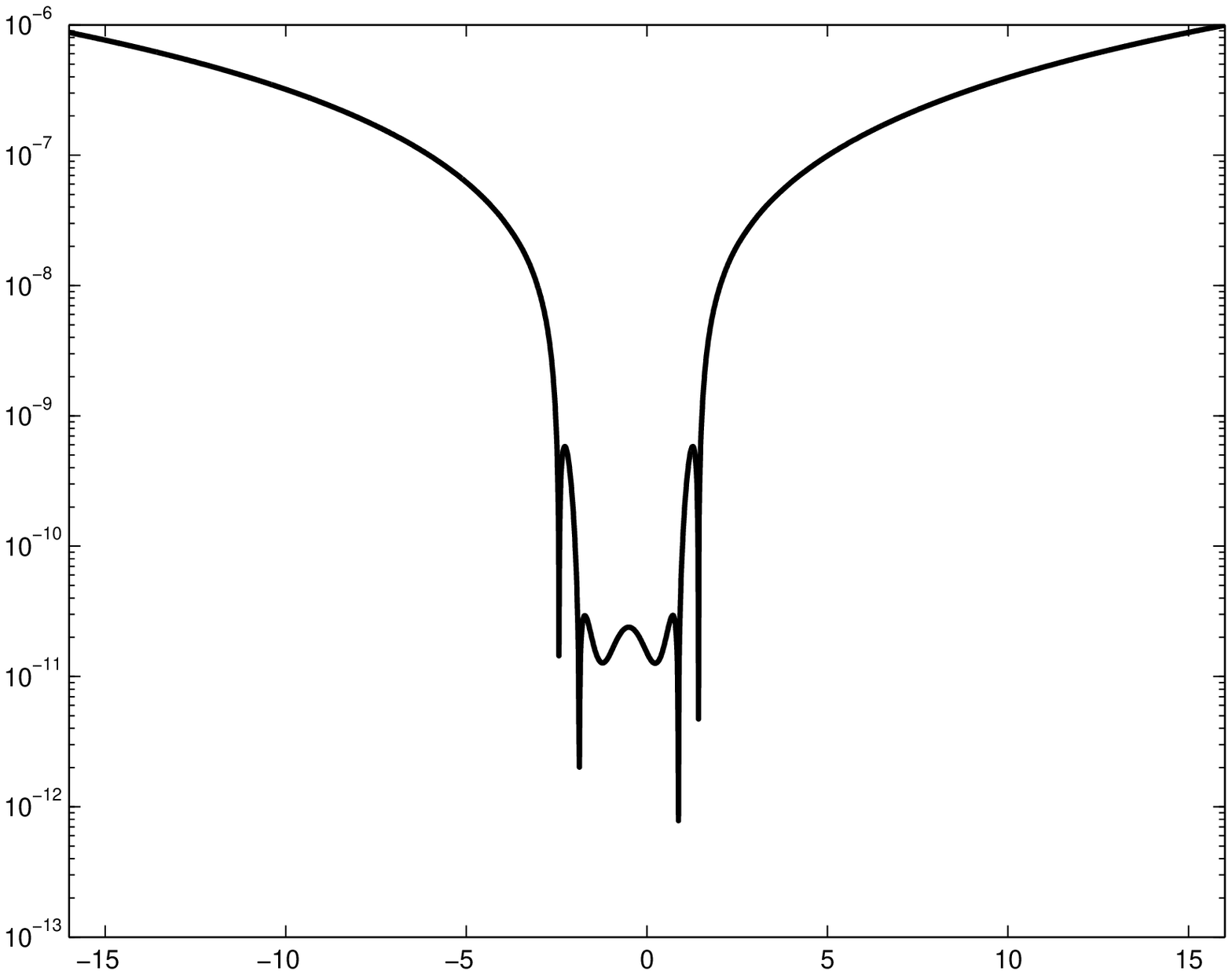}
\includegraphics[scale=0.33]{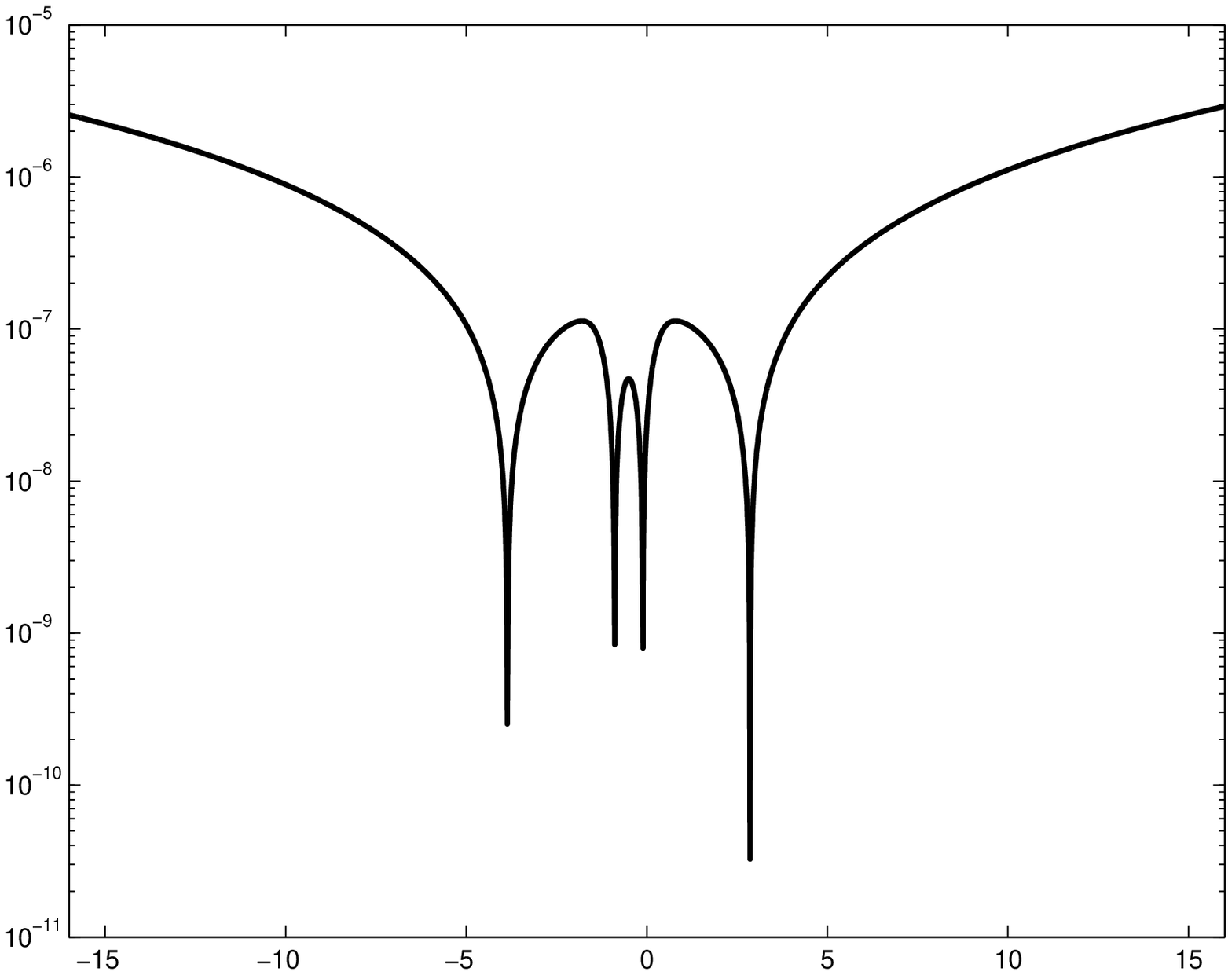}
\caption{$E_{GD}$ (to the left) and $E_{GF}$ (to the right) in semi logarithmic scale  \label{graficonorma}}
\end{figure}

Now let us investigate on the focusing case  considering the initial potential \eqref{gaussiana} with $q_0=2.5,\,\mu=1,\,\sigma=2$. As a result, inequality \eqref{bound_gaussian} implies that we have two simple bound states $\{\lambda_1,\lambda_2\}$ whose real part is $-1/2$. At first we compute the auxiliary functions by solving systems \eqref{nucleiKbarrati}-\eqref{nucleiM} with $L=8$ and $n=3000$, then we solve equations \eqref{Mar_K}-\eqref{Mar_M}, compute the scattering matrix and  the Fourier transforms of the reflection coefficients. 
At this point, we apply the matrix pencil method described in Section \ref{sect:Prony} assuming that we have not more than five bound states. Our method recognize that, as theoretically expected, we have two simple bound states having  real part equal to $-\mu/2$. In fact we get
\begin{align*}
\lambda_1&=-0.50+1.97 \im  \quad \lambda_2=-0.50+0.79\im
\end{align*}
with the corresponding norming constants 
\begin{align*}
\Gamma_{\ell,1}&= 9.28-1.50 \, 10^{-8}\im \quad 
\Gamma_{\ell,2}= 3.74-1.76 \,  10^{-11} \im \\
\Gamma_{r,1}&= 9.28+1.50 \, 10^{-8}\im \quad 
\Gamma_{r,2}= 3.74+1.76 \,  10^{-11} \im.
\end{align*}

Finally, in Figure \ref{graficonorma} we represent in semi logarithmic scale the error function
$$E_{GF}(\lambda)=\left \| \dfrac{1}{2}(\mathbf{S}^\dagger(\lambda) \mathbf{J} \mathbf{S} (\lambda)+\mathbf{S}(\lambda) \mathbf{J} \mathbf{S}^\dagger(\lambda))-\mathbf{J} \right \| $$
for $\lambda \in [-2L,2L]$ that we have computed to check the validity of the algebraic property \eqref{proprS1}. 

\section*{Conclusions}
The numerical results show that our numerical method is effective in both the focusing and defocusing cases, provided the initial potential decays to zero at infinity and is at least continuous. This positive result is due to the possibility to know each pair of functions on the whole plane, by solving the relative Volterra system on a bounded computational triangle. The accuracy of the identification of the spectral parameters strongly depends on this result, since all the subsequent computations require the knowledge of the auxiliary functions on their computational triangles. 

We believe that the method can be extended, with the same accuracy of the results, in the presence of jump discontinuities of the initial potential. To this end, a numerically stable method for the solution of Fredholm integral equations \eqref{eq_rho1}-\eqref{eq_rho2} and \eqref{eq_elle1}-\eqref{eq_elle2} should be developed. The development of such a method should also be accompanied by an extensive numerical experimentation which requires the exact knowledge of scattering data in at least one case in which the initial potential has jump discontinuities. Considering that such research takes a rather long time, the development of such a method is postponed to a next paper.     

\section{Appendix} \label{sect:appendix}
\section*{Supports of the auxiliary functions}

In this section we determine the supports of the auxiliary functions $K(x,y)$
and $M(x,y)$ if the potentials $u_0(x)$ and $v_0(x)$ have their supports in
$[-L,L]$. It suffices to prove parts (2) of Lemmas 5.1 and 5.2 in \cite{Fermo2014-PUB}, because the
proofs of the other three parts of these two lemmas are immediate and proceed
as in the discrete case.

Put
$$\nu(\bar{K}^{\up};x)
=\int_x^\infty|\bar{K}^{\up}(x,y)|\,dy,\qquad
\nu(\bar{K}^{\dn};x)
=\int_x^\infty|\bar{K}^{\dn}(x,y)|\,dy;$$
$$Q(x)=\max(|u_0(x)|,|v_0(x)|),\qquad P(x)=\nu(\bar{K}^{\up};x)
+\nu(\bar{K}^{\dn};x),$$
where $Q$ and $P$ are bounded \cite{VanDerMee2013}.
Then for $x\le L$ and $x+y\ge2L$ the integral equations (3.1) have zero
right-hand sides, because $v_0(\tfrac{1}{2}(x+y))=0$ for $x+y>2L$. Integrating
the absolute values of $\bar{K}^{\up}(x,y)$ and
$\bar{K}^{\text{\tiny \dn}}(x,y)$ with respect to $y\in(x,+\infty)$, we
obtain
\begin{align*}
\nu(\bar{K}^{\up};x)&\le\int_x^L|u_0(z)|
\nu(\bar{K}^{\dn};z)\,dz,\\
\nu(\bar{K}^{\dn};x)&\le\int_x^L|v_0(z)|
\nu(\bar{K}^{\up};z)\,dz,
\end{align*}
so that
$$0\le P(x)\le\int_x^L Q(z)P(z)\,dz.$$
Hence iterating two times  the last inequality we have
\begin{align*}
P(x)\le\int_x^L Q(z)P(z)\,dz & \leq \int_x^L Q(z)  \int_z^L Q(t) \int_t^L Q(w) P(w) \, dw \, dt \, dz \\ & \leq \left( \int_x^L Q(w) P(w) \, dw \right) \left( \int_x^L Q(z) \int_z^L Q(t) \, dt \, dz \right) \\ & = \left( \int_x^L Q(w) P(w) \, dw \right) \left( \int_x^L  -\frac{1}{2} \frac{d}{dz} \left( \int_z^L Q(t) \, dt \right)^2 \, dz \right) \\ &= \left( \int_x^L Q(w) P(w) \, dw \right) \left[ -\frac{1}{2} \left( \int_z^L Q(t) \, dt \right)^2 \,  \right]_{z=x}^{z=L} \\ &= \left( \int_x^L Q(w) P(w) \, dw \right)  \frac{1}{2} \left( \int_x^L Q(t) \, dt \right)^2. 
\end{align*}
Thus iterating $n-1$ times we get
$$0\le P(x)\le\frac{1}{n!}\left[\int_x^L Q(w)\,dw\right]^n
\int_x^L Q(z)P(z)\,dz.$$
Taking the limit as $n\to+\infty$, we get $P(x)=0$ and hence
$\bar{K}^{\up}(x,y)=\bar{K}^{\dn}(x,y)=0$ for
almost every $y>x$, as claimed. The proof of part (2) of Lemma 5.2 is analogous.

{\bf{Acknowledgements}}
The research has been partially supported by  INdAM (National Institute for Advanced Mathematics, Italy).

\bibliographystyle{amsplain}
\bibliography{biblio}

\end{document}